\newtheorem{theorem}{Theorem}[section]
\newtheorem{prop}[theorem]{Proposition}
\newtheorem{alg}[theorem]{Algorithm}
\newtheorem{eg}[theorem]{Example}
 \let\mathscr\relax
\numberwithin{equation}{section}
\newcommand{\bbm}{\begin{bmatrix}}
	\newcommand{\ebm}{\end{bmatrix}}
\newcommand{\be}{\begin{equation}}
	\newcommand{\ee}{\end{equation}}
\newcommand{\baray}{\begin{array}}
	\newcommand{\earay}{\end{array}}
	\newcommand{\re}{\mathbb{R}}
	\newcommand{\N}{\mathbb{N}}
	\newcommand{\eps}{\epsilon}
	\newcommand{\dt}{\delta}
	\def\af{\alpha}
	\def\gm{\gamma}
	\newcommand{\sig}{\sigma}
	\newcommand{\reff}[1]{(\ref{#1})}
	\newcommand{\mc}[1]{\mathcal{#1}}
	\newcommand{\RN}[1]{%
		
		\textup{\uppercase\expandafter{\romannumeral#1}}%
		
	}
\begin{document}
	
	\title{Polynomial Optimization Over Unions of Sets}

	\author{Jiawang Nie}
	\address{Jiawang Nie, Linghao Zhang, Department of Mathematics, University of California San Diego,
     9500 Gilman Drive, La Jolla, CA, USA, 92093.}
	\email{njw@math.ucsd.edu, liz010@ucsd.edu}

	\author{Linghao Zhang}

 	\subjclass[2020]{90C23, 65K05, 90C22}
 	\keywords{polynomial optimization, union of sets, unified hierarchy, moment relaxation, sum of squares.}
	\begin{abstract}
		This paper studies the polynomial optimization problem whose feasible set is a union of several basic closed semialgebraic sets. We propose a unified hierarchy of Moment-SOS relaxations to solve it globally. Under some assumptions, we prove the asymptotic or finite convergence of the unified hierarchy. Special properties for the univariate case are discussed.
		The application for computing $(p,q)$-norms of matrices is also presented.
	\end{abstract}

	\maketitle

	\section{Introduction}
	We consider the optimization problem
	\begin{equation}\label{1.1}
		\left\{ \baray{cl}
			\min & f(x)\\
			s.t. & x \in K \coloneqq \bigcup\limits_{l=1}^m K_l,
		\earay \right.
	\end{equation}
	where each $K_l$ is the basic closed semialgebraic set given as
	\[
	K_l = \left\{ x \in \mathbb{R}^n \ \middle\vert \begin{array}{l}
		c_i^{(l)}(x) = 0 ~ (i \in \mathcal{E}^{(l)}), \\
			c_j^{(l)}(x) \geq 0 ~ (j \in \mathcal{I}^{(l)})
	\end{array} \right\}.
	\]
	Here, all functions $f$, $c_i^{(l)}$, $c_j^{(l)}$ are polynomials in $x \coloneqq (x_1, \ldots, x_n)$; all $\mathcal{E}^{(l)}$ and $\mathcal{I}^{(l)}$ are finite labeling sets. We aim at finding the global minimum value $f_{min}$ of (\ref{1.1}) and a global minimizer $x^*$ if it exists.
	It is worthy to note that solving \reff{1.1} is equivalent to solving $m$ standard polynomial optimization problems by minimizing $f(x)$ over each $K_l$ separately, for $l = 1, \ldots, m$.
	When $K$ is nonempty and compact, $f_{min}$ is achievable at a feasible point,  and (\ref{1.1}) has a minimizer. When $K$ is unbounded, a minimizer may or may not exist. We refer to \cite[Section~5.1]{nie2023moment} for the existence of optimizers when the feasible set is unbounded.

	The optimization (\ref{1.1}) contains a broad class of problems.
	For the case $m=1$,
	if all functions are linear, then (\ref{1.1}) is a linear program (LP); if $f$ is quadratic and all $c_i^{(l)}, c_j^{(l)}$ are linear, then (\ref{1.1}) is a quadratic program (QP); if all $f, c_i^{(l)}, c_j^{(l)}$ are quadratic, then (\ref{1.1}) is a quadratically constrained quadratic program (QCQP).
	Polynomial optimization has wide applications, including
	combinatorial optimization \cite{deKbook, LauICM},
	optimal control \cite{HKL20},
	stochastic and robust optimization \cite{NYZ20, NYZZ23, ZCN23},
	generalized Nash equilibrium problems \cite{NTNEP, NTConvex, NTZ23},
	and tensor optimization \cite{DNY22, njwSTNN17, NTYZDehom, NieZhang18}.

	When the feasible set $K$ is a single basic closed semialgebraic set (i.e., $m=1$) instead of a union of several ones, the problem (\ref{1.1}) becomes a standard polynomial optimization problem. There exists much work for solving standard polynomial optimization problems. A classical approach for solving them globally is the hierarchy of Moment-SOS relaxations \cite{lasserre2001global}. Under the archimedeanness for constraining polynomials, this hierarchy gives a sequence of convergent lower bounds for the minimum value $f_{min}$. The Moment-SOS hierarchy has finite convergence if the linear independence constraint qualification, the strict complementarity and the second order sufficient conditions hold at every global minimizer \cite{NieOPCD}.
	When the equality constraints define a finite set, this hierarchy is also tight \cite{LLR08,Lau07,nie2013polynomial}.
	We refer to the books and surveys \cite{HKL20, DH23, LasBk15, Lau09, nie2023moment} for introductions to polynomial optimization.

	\subsection*{Contributions}
	When $m > 1$, the difficulty for solving the optimization problem (\ref{1.1}) increases.
	A straightforward approach to solve \reff{1.1} is to minimize $f(x)$ over each $K_l$ separately,	for $l = 1, \ldots, m$. By doing this, we reduce the problem (\ref{1.1}) into $m$ standard polynomial optimization problems.
	
	In this paper, we propose a unified Moment-SOS hierarchy for solving (\ref{1.1}). The standard $k$th order moment relaxation for minimizing $f(x)$ over the subset $K_l$ is (for $l = 1, \ldots, m$)
	\begin{equation}\label{1.3}
		\left\{ \baray{cl}
			\min & \langle f \; , \; y^{(l)}\rangle \\
			s.t. & \mathscr{V}_{c^{(l)}_i}^{(2k)}[y^{(l)}] = 0 ~ (i \in \mathcal{E}^{(l)}), \\
			& L_{c^{(l)}_j}^{(k)}[y^{(l)}] \succeq 0 ~ (j \in \mathcal{I}^{(l)}), \\
			& M_k[y^{(l)}] \succeq 0, \\
			& y^{(l)}_0 = 1, y^{(l)} \in \mathbb{R}^{\mathbb{N}^n_{2k}}.
		\earay \right.
	\end{equation}
	We refer to Section 2 for the above notation.
	The unified moment relaxation we propose in this paper is
	\begin{equation}\label{1.5}
		\left\{ \baray{cl}
			\min & \langle f \; , \; y^{(1)}\rangle + \cdots + \langle f \; , \; y^{(m)}\rangle \\
			s.t. & \mathscr{V}_{c^{(l)}_i}^{(2k)}[y^{(l)}] = 0 ~ (i \in \mathcal{E}^{(l)}), \\
			& L_{c^{(l)}_j}^{(k)}[y^{(l)}] \succeq 0 ~ (j \in \mathcal{I}^{(l)}), \\
			& M_k[y^{(l)}] \succeq 0, \sum\limits_{l=1}^{m}y_0^{(l)}=1, \\
			&  y^{(l)} \in \mathbb{R}^{\mathbb{N}^n_{2k}},  l = 1, \ldots, m. \\
		\earay \right.
	\end{equation}
	For $k = 1, 2, \ldots$, this gives a unified hierarchy of relaxations.
	
	A major advantage of \reff{1.5} is that it gives a unified convex relaxation for solving \reff{1.1} instead of solving it over each $K_l$ separately.
	It gives a sequence of lower bounds for the minimum value $f_{min}$ of (\ref{1.1}). Under the archimedeanness, we can prove the asymptotic convergence of this unified hierarchy. Moreover, under some further local optimality conditions, we can prove its finite convergence. We, in addition, study the special properties for the univariate case. When $n = 1$, there are nice representations for polynomials that are nonnegative over intervals.
	The resulting unified Moment-SOS relaxations can be expressed in a more mathematically concise manner.
	We also present numerical experiments to demonstrate the efficiency of our unified Moment-SOS hierarchy.

	An application of \reff{1.1} is to compute the $(p,q)$-norm of a matrix $A$:
	\[
	\| A \|_{p,q} \coloneqq \max_{x \neq 0} \frac{\| Ax \|_{p}}{\|x\|_{q}} \ = \max_{\|x\|_q=1}\| Ax \|_{p},
	\]
	where $p,q$ are positive integers.
	When $p$ and $q$ are both even, this is a standard polynomial optimization problem. If one of them is odd, the norm $\| A \|_{p,q}$ can be expressed as the optimal value of a problem like \reff{1.1}. For instance, when $p=4$ and $q=3$, we can formulate this problem as
	\begin{equation}\label{norm_43}
		\left\{	
		\begin{array}{cl}
			\max & (\|Ax\|_4)^4 \\
			s.t. & |x_1|^3 + \cdots + |x_n|^3 =1.
		\end{array}
		\right.
	\end{equation}
	The feasible set of the above can be expressed in the union form as in \reff{1.1}.
	It is interesting to note that the number of sets in the union is $2^n$, so the difficulty of \reff{norm_43} increases substantially as $n$ gets larger.
	More details are given in Section~\ref{sc:eg}.

	The paper is organized as follows. Section~\ref{sc:pre} introduces the notation and some preliminary results about polynomial optimization. Section~\ref{sc:uni} gives the unified hierarchy of Moment-SOS relaxations; the asymptotic and finite convergence are proved under certain assumptions. Section~\ref{sc:n=1} studies some special properties of univariate polynomial optimization. Section~\ref{sc:eg} gives numerical experiments and applications. Section~\ref{sc:con} draws conclusions and makes some discussions for future work.

	\section{Preliminaries}\label{sc:pre}

	\subsection*{Notation}
	The symbol $\mathbb{N}$ (resp., $\mathbb{R}$) stands for the set of nonnegative integers (resp., real numbers). For an integer $m>0$, denote $[m] \coloneqq \{1, 2, \ldots, m\}$. For a scalar $t \in \mathbb{R}$, $\lceil t \rceil$ denotes the smallest integer greater than or equal to $t$, and $\lfloor t \rfloor$ denotes the largest integer less than or equal to $t$. For a polynomial $p$, $\deg(p)$ denotes its total degree and vec($p$) denotes its coefficient vector. For two vectors $a$ and $b$, the notation $a \perp b$ means they are perpendicular.
	The superscript $^T$ denotes the transpose of a matrix or vector. For a symmetric matrix $X$, $X \succeq 0$ (resp., $X \succ 0$) means that $X$ is positive semidefinite (resp., positive definite). The symbol $S^n_+$ stands for the set of all $n$-by-$n$ real symmetric positive semidefinite matrices. For two symmetric matrices $X$ and $Y$, the inequality $X\succeq Y$ (resp., $X\succ Y$) means that $X-Y\succeq0$ (resp., $X-Y\succ0$). For $x \coloneqq (x_1, \ldots, x_n)$ and a power vector $\alpha \coloneqq (\alpha_1, \ldots, \alpha_n) \in \mathbb{N}^n$, denote $|\alpha| \coloneqq \alpha_1 + \cdots + \alpha_n$ and the monomial $x^{\alpha} \coloneqq x_1^{\alpha_1}\cdots x_n^{\alpha_n}$.
	For a real number $q\geq 1$, the $q$-norm of $x$ is denoted as $\|x\|_q \coloneqq (|x_1|^q + \cdots + |x_n|^q)^{1/q}$.
	The notation
	\[\mathbb{N}_d^n \coloneqq \{\alpha \in \mathbb{N}^n : |\alpha| \leq d\}\]
	denotes the set of monomial powers with degrees at most $d$.
	The symbol $\mathbb{R}^{\mathbb{N}_d^n}$ denotes the space of all real vectors labeled by $\alpha \in \mathbb{N}_d^n$. The column vector of all monomials in $x$ and of degrees up to $d$ is denoted as
	\[[x]_d \coloneqq
	\begin{bmatrix}
		1 & x_1 & \cdots & x_n & x_1^2 & x_1x_2 & \cdots & x_n^d
	\end{bmatrix}^T.
	\]
	The notation $\mathbb{R}[x] \coloneqq \mathbb{R}[x_1, \ldots, x_n]$ stands for the ring of polynomials in $x$ with real coefficients. Let $\mathbb{R}[x]_d$ be the set of real polynomials with degrees at most $d$. Denote by $\mathscr{P}(K)$ the cone of polynomials that are nonnegative on $K$ and let
	\[
	\mathscr{P}_d(K) \coloneqq \mathscr{P}(K)\cap\mathbb{R}[x]_d.
	\]

	In the following, we review some basics of polynomial optimization. For a tuple $h \coloneqq (h_1, \ldots, h_s)$ of polynomials in $\mathbb{R}[x]$, let
	\[
	\text{Ideal}[h] \coloneqq h_1 \cdot \mathbb{R}[x] + \cdots + h_s \cdot \mathbb{R}[x].
	\]
	The $2k$th \textit{truncation} of Ideal[$h$] is
	\[
	\text{Ideal}[h]_{2k} \coloneqq h_1 \cdot \mathbb{R}[x]_{2k-\deg(h_1)} + \cdots + h_s \cdot \mathbb{R}[x]_{2k-\deg(h_s)}.
	\]
	The real variety of $h$ is
	\[
	V_{\mathbb{R}}(h) = \{x\in \re^n : h(x) = 0\}.
	\]
	A polynomial $\sig \in \mathbb{R}[x]$ is said to be a sum of squares (SOS) if there are polynomials $q_1, \ldots, q_t \in \mathbb{R}[x]$ such that $\sig= q_1^2 + \cdots + q_t^2$. The convex cone of all SOS polynomials in $x$ is denoted as $\Sigma[x]$. We refer to \cite{HKL20, LasBk15, Lau09, nie2023moment} for more details. For a tuple of polynomials $g \coloneqq (g_1, \ldots, g_t)$, its \textit{quadratic module} is (let $g_0 \coloneqq 1$)
	\[
	\text{QM}[g] \coloneqq \Big \{ \sum_{i=0}^{t} \sigma_{i} g_{i} \,\Big \vert\,
        \text{each} \, \, \sigma_{i} \in \Sigma[x] \Big \}.
	\]
	For a positive integer $k$, the degree-$2k$ truncation of QM[$g$] is
	\[
	\text{QM}[g]_{2k} \coloneqq \Big\{ \sum_{i=0}^{t} \sigma_{i} g_{i} \,\Big \vert\, \sigma_{i} \in \Sigma[x], \deg(\sigma_{i}g_i) \leq 2k \Big\}.
	\]
	The quadratic module QM[$g$] is said to be \textit{archimedean} if there exists $q \in \text{QM}[g]$ such that the set
	\[
	S(q) \coloneqq \{x \in \mathbb{R}^n \mid q(x) \geq 0\}
	\]
	is compact.

	\begin{theorem}\cite{putinar1993positive}\label{thm 2.1}
		If $\text{\normalfont QM}[g]$ is archimedean and a polynomial $f > 0$ on $S(g)$, then $f \in \text{\normalfont QM}[g]$.
	\end{theorem}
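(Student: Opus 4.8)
The statement is Putinar's Positivstellensatz, so my plan is to reproduce the classical functional-analytic argument and prove it by contradiction: assuming $f \notin \mathrm{QM}[g]$, I would manufacture a probability measure supported on $S(g)$ against which the integral of $f$ is nonpositive, contradicting $f > 0$ on $S(g)$. Write $M := \mathrm{QM}[g]$ and $\|x\|^2 := x_1^2 + \cdots + x_n^2$. The first step is a reduction of the archimedean hypothesis to a usable algebraic form: from the existence of $q \in M$ with $S(q)$ compact one deduces, by a standard (but genuinely nontrivial) preliminary lemma, that there is a real number $N$ with $N - \|x\|^2 \in M$. Equivalently, $1$ is an order unit of $M$, i.e. for every $p \in \mathbb{R}[x]$ there is $N_p$ with $N_p \pm p \in M$. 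This reduction is not circular, as it uses only elementary arguments and not Putinar's theorem itself.

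Next I would carry out the separation. Because $1$ is an order unit, it is an algebraic interior point of the convex cone $M$: along every line $t \mapsto 1 + tp$ through $1$ one has $1 \pm \tfrac{1}{N_p} p \in M$. Hence, assuming $f \notin M$, the algebraic (Minkowski-gauge) version of the Hahn--Banach separation theorem yields a nonzero linear functional $L : \mathbb{R}[x] \to \mathbb{R}$ with $L \ge 0$ on $M$ and $L(f) \le 0$; since $M$ is a cone, the separating value is $0$, and no topology or closedness of $M$ is required. Because $1$ is interior and $L \ne 0$, one checks that $L(1) > 0$, so after rescaling we may assume $L(1) = 1$.

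Now comes the GNS construction together with the spectral theorem. The functional $L$ is a state: $L(\sigma) \ge 0$ for every $\sigma \in \Sigma[x] \subseteq M$, so $(p,q) \mapsto L(pq)$ is a positive semidefinite bilinear form. Quotienting by its kernel and completing gives a Hilbert space $H$ with cyclic vector $\xi = [1]$, $\|\xi\| = 1$, on which the multiplication maps $X_i : [p] \mapsto [x_i p]$ act. Applying $L$ to $p^2 (N - \|x\|^2) \in M$ gives $\sum_i L(x_i^2 p^2) \le N\, L(p^2)$, that is $\sum_i \|X_i[p]\|^2 \le N \|[p]\|^2$; thus each $X_i$ extends to a bounded self-adjoint operator on $H$, and the $X_i$ commute. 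By the spectral theorem for a commuting tuple of bounded self-adjoint operators there is a projection-valued measure $E$ supported in a compact subset of $\mathbb{R}^n$, and setting $\mu(B) := \langle E(B)\xi, \xi\rangle$ yields a probability measure on $\mathbb{R}^n$ with $L(p) = \int p \, d\mu$ for all $p$.

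Finally I would localize the support and conclude. Since each $g_j \in M$, we have $L(g_j p^2) \ge 0$ for all $p$, so $g_j(X) \succeq 0$ and hence $g_j \ge 0$ holds $\mu$-almost everywhere; intersecting over $j$ gives $\mathrm{supp}(\mu) \subseteq S(g)$. Then $L(f) = \int_{S(g)} f \, d\mu > 0$, because $f > 0$ on $S(g)$ and $\mu$ has total mass $1$, contradicting $L(f) \le 0$. Therefore $f \in M = \mathrm{QM}[g]$. I expect the main obstacle to be this functional-analytic core, namely establishing boundedness and commutativity of the multiplication operators and, above all, invoking the spectral theorem to convert the abstract state $L$ into an honest representing measure whose support is confined to $S(g)$; by contrast, the separation step is routine once $1$ is recognized as an order unit, and the reduction of archimedeanness to $N - \|x\|^2 \in \mathrm{QM}[g]$ is a standard preliminary.
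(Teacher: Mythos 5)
The paper does not prove Theorem~\ref{thm 2.1} at all: it is Putinar's Positivstellensatz, quoted directly from \cite{putinar1993positive}, so there is no internal argument to compare against. Your outline correctly reproduces the standard functional-analytic proof from that cited literature (reduction of archimedeanness to the order-unit condition $N-\|x\|^2\in\text{QM}[g]$, Eidelheit/Hahn--Banach separation of $f$ from the cone, GNS construction with bounded commuting self-adjoint multiplication operators, the spectral theorem yielding a representing probability measure supported in $S(g)$, and the resulting contradiction with $L(f)\le 0$), which is essentially Putinar's original argument.
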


	A vector $y \coloneqq \left(y_{\alpha}\right)_{\alpha \in \mathbb{N}^n_{2k}}$ is said to be a \textit{truncated multi-sequences} (tms) of degree $2k$.
	For $y \in \mathbb{R}^{\mathbb{N}^n_{2k}}$, the \textit{Riesz functional} determined by $y$ is the linear functional $\mathscr{L}_y$ acting on $\re[x]_{2k}$ such that
	\[
	\mathscr{L}_y\Big( \sum\limits_{\alpha \in \mathbb{N}^n_{2k}} p_{\alpha}x^{\alpha} \Big)
	\coloneqq \sum\limits_{\alpha \in \mathbb{N}^n_{2k}} p_{\alpha}y_{\alpha}.
	\]
	For convenience, we denote
	\[
	\langle p, y \rangle \coloneqq \mathscr{L}_y(p), \quad p\in \re[x]_{2k}.
	\]
	The \textit{localizing matrix} and \textit{localizing vector} of $p$ generated by $y$ are respectively
	\begin{align*}
	L_p^{(k)}[y] &\coloneqq \mathscr{L}_y(p(x) \cdot [x]_{s_1}[x]_{s_1}^T), \\
	\mathscr{V}_{p}^{(2k)}[y] &\coloneqq \mathscr{L}_y(p(x) \cdot [x]_{s_2}).
\end{align*}
	In the above, the linear operator is applied component-wisely and
	\[
	s_1 \coloneqq \lceil k - \deg(p)/2 \rceil, \quad s_2 \coloneqq 2k - \deg(p).
	\]
	We remark that $L_p^{(k)}[y] \succeq 0$ if and only if $\mathscr{L}_y \ge 0$ on $\text{QM}[p]_{2k}$, and $\mathscr{V}_{p}^{(2k)}[y]=0$ if and only if $\mathscr{L}_y = 0$ on $\text{Ideal}[p]_{2k}$.
	More details for this can be found in \cite{LasBk15, Lau09, nie2023moment}.
	The localizing matrix $L_p^{(k)}[y]$ satisfies the following equation
	\[
	\left\langle p(x)\left(v^T[x]_s\right)^2, y \right\rangle = v^T\left(L_p^{(k)}[y]\right)v
	\]
	for the degree $s \coloneqq k - \lceil \deg(p)/2 \rceil$ and for every vector $v$ of length $n+s \choose s$. For instance, when $n = 3$, $k = 3$ and $p =  x_1x_2x_3 - x_3^3$,
	\[
	L_p^{(3)}[y] =
	\begin{bmatrix}
		y_{111}-y_{003} & y_{211}-y_{103} & y_{121}-y_{013} & y_{112}-y_{004} \\
		y_{211}-y_{103} & y_{311}-y_{203} & y_{221}-y_{113} & y_{212}-y_{104} \\
		y_{121}-y_{013} & y_{221}-y_{113} & y_{131}-y_{023} & y_{122}-y_{014} \\
		y_{112}-y_{004} & y_{212}-y_{104} & y_{122}-y_{014} & y_{113}-y_{005}
	\end{bmatrix}.
	\]
	In particular, for $p=1$, we get the \textit{moment matrix} $M_k[y] \coloneqq L_1^{(k)}[y]$.
	Similarly, the localizing vector $\mathscr{V}_{p}^{(2k)}[y]$ satisfies
	\[
	\left\langle p(x)\left(v^T[x]_t\right), y \right\rangle = \Big(\mathscr{V}_{p}^{(2k)}[y]\Big)^Tv
	\]
	for $t \coloneqq 2k - \deg(p)$.
	For instance, when $n = 3$, $k = 2$ and $p =  x_1^2 + x_2^2 + x_3^2 - 1$,
	\[
	\mathscr{V}_{p}^{(4)}[y] =
	\begin{bmatrix}
		y_{200} + y_{020}  + y_{002} - y_{000} \\
		y_{300} + y_{120} + y_{102} - y_{100} \\
		y_{210} + y_{030} + y_{012} - y_{010} \\
		y_{201} + y_{021} + y_{003} - y_{001} \\
		y_{400} + y_{220} + y_{202} - y_{200} \\
		y_{310} + y_{130} + y_{112} - y_{110} \\
		y_{301} + y_{121} + y_{103} - y_{101} \\
		y_{220} + y_{040} + y_{022} - y_{020} \\
		y_{211} + y_{031} + y_{013} - y_{011} \\
		y_{202} + y_{022} + y_{004} - y_{002}
	\end{bmatrix}.
	\]
	It is worthy to note that if $L_{g_i}^{(k)}[y] \succeq 0$ and $f \in \text{\normalfont QM}[g]_{2k}$, then $\langle f \; , \; y \rangle \geq 0$. This can be seen as follows. For $f = \sum\limits_{i=0}^{t} g_i\sigma_{i}$ with $\sigma_{i} = \sum\limits_j p_{ij}^2 \in \Sigma[x]$ and $\deg( g_i\sigma_{i}) \leq 2k$, we have
	\[
	\langle f \; , \; y \rangle = \Big\langle \sum\limits_{i=0}^{t} g_i\sigma_{i} \; , \; y \Big\rangle
	= \sum\limits_{i, j} \text{vec}(p_{ij})^T \big(L_{g_i}^{(k)}[y] \big) \text{vec}(p_{ij}) \geq 0.
	\]

	A tms $y\in \re^{\N_{2k}^n}$ is said to \textit{admit} a Borel measure $\mu$ if
	\[
	y_{\alpha} = \int x^{\alpha} d\mu \quad \text{for all} ~ \alpha \in \N^n_{2k}.
	\]
	Such $\mu$ is called a \textit{representing measure} for $y$. The \textit{support} of $\mu$ is the smallest closed set $S \subseteq \mathbb{R}^n$ such that $\mu(\mathbb{R}^n \setminus S) = 0$, denoted as $supp(\mu)$. The measure $\mu$ is said to be supported in a set $K$ if $supp(\mu) \subseteq K$.

	\subsection{Moment relaxation}\label{sub 2.1}
	Consider the polynomial optimization problem
	\begin{equation}\label{1.2}
		\left\{ \baray{cl}
			\min & f(x)\\
			s.t. &  c_i(x) = 0 ~ (i \in \mathcal{E}), \\
			& c_j(x) \geq 0 ~ (j \in \mathcal{I}),
		\earay \right.
	\end{equation}
	where $f, c_i, c_j$ are polynomials in $x$.
	The $k$th order moment relaxation for (\ref{1.2}) is
	\begin{equation}\notag
		\left\{ \baray{cl}
			\min & \langle f \; , \; y\rangle \\
			s.t. & \mathscr{V}_{c_i}^{(2k)}[y] = 0 ~ (i \in \mathcal{E}), \\
			& L_{c_j}^{(k)}[y] \succeq 0 ~ (j \in \mathcal{I}), \\
			& M_k[y] \succeq 0, \\
			& y_0 = 1, y \in \mathbb{R}^{\mathbb{N}^n_{2k}}.
		\earay \right.
	\end{equation}
	Suppose the tms $y^*$ is a minimizer of above.
	Denote the degree
	\[
	d \coloneqq \max_{i \in \mathcal{E} \cup \mathcal{I}} \{ \lceil \deg(c_{i})/2 \rceil \}.
	\]
	We can extract minimizers if $y^*$ satisfies the \textit{flat truncation} condition: there exists an integer $k \ge \, t \geq \max\{d, \deg(f)/2\}$ such that
	\begin{equation}\label{FT:y*}
		\text{rank}\,M_{t-d}[y^{*}] = \text{rank}\,M_t[y^{*}].
	\end{equation}
	Interestingly, if (\ref{FT:y*}) holds, we can extract $r \coloneqq \text{rank}\,M_t[y^{*}]$ minimizers for the optimization problem (\ref{1.2}).
	
The following result is based on work by Curto and Fialkow \cite{curto2005truncated} and Henrion and Lasserre \cite{henrion2005detecting}. The form of the result as presented here can be found in book \cite[Section~2.7]{nie2023moment}.
	\begin{theorem} \cite{curto2005truncated,henrion2005detecting}
		\label{FT}
		If $y^*$ satisfies (\ref{FT:y*}), then there exist  $r \coloneqq \text{\normalfont rank}\,M_t[y^{*}]$ distinct feasible points $u_1, \ldots, u_r$ for (\ref{1.2}) and positive scalars $\lambda_1, \ldots, \lambda_r$ such that
		\[
		y^{*}|_{2t} = \lambda_1[u_1]_{2t} + \cdots + \lambda_r[u_r]_{2t}.
		\]
		In the above, the notation $y^{*}|_{2t}$ stands for its subvector of entries that are labeled by $\alpha \in \mathbb{N}_{2t}^n$.
	\end{theorem}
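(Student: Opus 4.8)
The plan is to reduce the flat-truncation hypothesis to a single flat-extension step, invoke the realizability theorem of Curto and Fialkow to produce a finitely atomic representing measure for $y^*|_{2t}$, and then read feasibility of the atoms off the localizing constraints.

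First I would record the monotonicity of ranks. Each moment matrix $M_s[y^*]$ is a principal submatrix of $M_{s+1}[y^*]$, so the ranks form a nondecreasing chain
\[
\rank M_{t-d}[y^*] \le \rank M_{t-d+1}[y^*] \le \cdots \le \rank M_t[y^*].
\]
Since $d \ge 1$ and the two ends agree by (\ref{FT:y*}), every inequality is an equality; in particular $\rank M_t[y^*] = \rank M_{t-1}[y^*]$, so $M_t[y^*]$ is a flat extension of $M_{t-1}[y^*]$. Write $r \coloneqq \rank M_t[y^*]$.

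The heart of the argument is the flat-extension theorem \cite{curto2005truncated}. Flatness of $M_t[y^*]$ over $M_{t-1}[y^*]$ propagates: the extension can be continued uniquely, and with the rank unchanged, to moment matrices of all higher orders, yielding a positive semidefinite infinite moment matrix of finite rank $r$. Reading $\ker M_t[y^*]$ as a space of polynomials of degree at most $t$, these generate an ideal whose real variety consists of exactly $r$ distinct points $u_1, \ldots, u_r$; equivalently $y^*|_{2t}$ admits a unique $r$-atomic representing measure $\mu = \sum_{i=1}^r \lambda_i \delta_{u_i}$ with each $\lambda_i > 0$. Matching the moments of $\mu$ with the entries of $y^*|_{2t}$ gives the desired identity $y^*|_{2t} = \lambda_1 [u_1]_{2t} + \cdots + \lambda_r [u_r]_{2t}$.

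It remains to verify that every atom $u_i$ is feasible for (\ref{1.2}). Since $t \le k$, the relaxation constraints restrict to order $t$, so for any $p$ with $\deg(c_j p^2) \le 2t$ we have $\int c_j\, p^2\, d\mu = \langle c_j p^2, y^* \rangle = \text{vec}(p)^T L_{c_j}^{(t)}[y^*]\, \text{vec}(p) \ge 0$. Taking $p$ to be a Lagrange-type polynomial normalized so that $p(u_i) = 1$ and $p$ vanishes at the remaining atoms isolates a single term and gives $\lambda_i\, c_j(u_i) \ge 0$, hence $c_j(u_i) \ge 0$ for $j \in \mathcal{I}$; applying the same interpolation to $\mathscr{V}_{c_i}^{(2t)}[y^*] = 0$ forces $c_i(u_i) = 0$ for $i \in \mathcal{E}$. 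Therefore each $u_i$ lies in $K$.

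I expect the principal obstacle to be the flat-extension step itself: showing that flatness propagates to every higher order and delivers an honest finitely atomic measure, rather than a merely formal moment sequence, is the substantive content of Curto--Fialkow and is far from a routine computation. A subsidiary technical point is the degree bookkeeping in the feasibility argument --- one must guarantee that the interpolating polynomials can be chosen of degree at most $t - d$ so that the products $c_j p^2$ remain inside the order-$t$ localizing matrices, and it is exactly the gap $d$ built into (\ref{FT:y*}) that leaves enough room for this.
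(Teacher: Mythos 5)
The paper itself gives no proof of this theorem---it is quoted from Curto--Fialkow and Henrion--Lasserre, in the form presented in \cite[Section~2.7]{nie2023moment}---and your proposal correctly reconstructs exactly that standard argument: the rank condition (\ref{FT:y*}) forces $\text{rank}\,M_{t-1}[y^*]=\text{rank}\,M_t[y^*]$, the Curto--Fialkow flat extension theorem then yields the unique $r$-atomic representing measure for $y^*|_{2t}$, and the localizing constraints together with degree-$(t-d)$ interpolation place the atoms in the feasible set of (\ref{1.2}). The only step worth writing out fully is the existence of the interpolating polynomials of degree at most $t-d$: it holds because $M_{t-d}[y^*]=\sum_{i=1}^{r}\lambda_i[u_i]_{t-d}[u_i]_{t-d}^T$ has rank $r$ by (\ref{FT:y*}), which forces the vectors $[u_1]_{t-d},\ldots,[u_r]_{t-d}$ to be linearly independent, and this is precisely the solvability of the interpolation problem you invoke.
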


	\subsection{Optimality conditions}
	\label{ssc:opcd}

	Suppose  $u$ is a local minimizer of (\ref{1.2}). Denote the active labeling set
	\[
	J(u) \coloneqq \{j \in \mathcal{I} : c_j(u) = 0\}.
	\]
	The  \textit{linear independence constraint qualification condition} (LICQC) holds at $u$ if the gradient set $\{\nabla c_i(u)\}_{i \in \mathcal{E} \cup J(u)}$ is linearly independent. When LICQC holds, there exists a Lagrange multiplier vector
	\[
	\lambda \coloneqq (\lambda_i)_{i \in \mathcal{E}} \cup (\lambda_j)_{j \in \mathcal{I}}
	\]
	satisfying the Karush-Kuhn-Tucker (KKT) conditions
	\begin{equation}\label{2.1}
		\nabla f(u) = \sum\limits_{i \in \mathcal{E}}\lambda_i \nabla c_i(u) + \sum\limits_{j \in \mathcal{I}}\lambda_j \nabla c_j(u),
	\end{equation}
	\begin{equation}\label{2.2}
		0 \leq c_j(u) \perp \lambda_j \geq 0, \quad  \text{for all } j \in \mathcal{I}.
	\end{equation}
	The equation (\ref{2.1}) is known as the \textit{first order optimality condition} (FOOC), and (\ref{2.2}) is called the \textit{complementarity condition} (CC).  If, in addition, $\lambda_j + c_j(u) > 0$ for all $j \in \mathcal{I}$, the \textit{strict complementarity condition} (SCC) is said to hold at $u$. For the $\lambda_i$ satisfying (\ref{2.1})-(\ref{2.2}), the Lagrange function is
	\[
	L(x) \coloneqq f(x) - \sum\limits_{i \in \mathcal{E}}\lambda_i c_i(x) - \sum\limits_{j \in \mathcal{I}}\lambda_j c_j(x).
	\]
	The Hessian of the Lagrangian is
	\[
	\nabla^2L(x) \coloneqq \nabla^2f(x) - \sum\limits_{i \in \mathcal{E}}\lambda_i \nabla^2c_i(x) - \sum\limits_{j \in \mathcal{I}}\lambda_j \nabla^2c_j(x).
	\]
	If $u$ is a local minimizer and LICQC holds, the \textit{second order necessary condition} (SONC) holds at $u$:
	\begin{equation}\notag
		v^T\Bigl(\nabla^2L(u)\Bigr)v \geq 0 \quad \text{for all} \quad v \in \bigcap_{i \in \mathcal{E} \cup J(u)} \nabla c_i(u)^{\perp},
	\end{equation}
	where $\nabla c_i(u)^{\perp} \coloneqq \{v \in \mathbb{R}^n \mid \nabla c_i(u)^Tv = 0\}$.
	Stronger than SONC is the \textit{second order sufficient condition} (SOSC):
	\begin{equation}\notag
		v^T\Bigl(\nabla^2L(u)\Bigr)v > 0 \quad \text{for all} \quad 0 \neq v \in \bigcap_{i \in \mathcal{E} \cup J(u)} \nabla c_i(u)^{\perp}.
	\end{equation}
	If a feasible point $u$ satisfies FOOC, SCC, and SOSC, then $u$ must be a strict local minimizer. We refer to the book \cite{Brks} for optimality conditions in nonlinear programming.

	\section{A Unified Moment-SOS Hierarchy}\label{sc:uni}
	In this section, we give a unified hierarchy of Moment-SOS relaxations to solve (\ref{1.1}). Under some assumptions, we prove this hierarchy has asymptotic or finite convergence.

	\subsection{Unified Moment-SOS relaxations}
	For convenience of description, we denote the equality and inequality constraining polynomial tuples for $K_l$ as
	\[
	c^{(l)}_{eq} \coloneqq (c^{(l)}_i)_{i \in \mathcal{E}^{(l)}}, \quad c^{(l)}_{in} \coloneqq (c^{(l)}_j)_{j \in \mathcal{I}^{(l)}}.
	\]
	Recall that Ideal[$c^{(l)}_{eq}$] denotes the ideal generated by $c^{(l)}_{eq}$ and QM[$c^{(l)}_{in}$] denotes the quadratic module generated by $c^{(l)}_{in}$.  We refer to Section~\ref{sc:pre} for the notation. The minimum value of (\ref{1.1}) is denoted as $f_{min}$ and its feasible set is $K$.
	We look for the largest scalar $\gamma$ that is a lower bound of $f$ over $K$, i.e., $f - \gamma \in \mathscr{P}(K)$. Since
	\[
	K = K_1 \cup K_2 \cup \cdots \cup K_m,
	\]
	we have $f - \gamma \geq 0$ on $K$ if and only if $f - \gamma \geq 0$ on $K_l$ for every $l = 1, \ldots, m$. Note that $f - \gamma \geq 0$ on $K_l$ is ensured by the membership (for some degree $2k$)
	\[
	f - \gamma \in \text{Ideal}[c^{(l)}_{eq}]_{2k} + \text{QM}[c^{(l)}_{in}]_{2k}.
	\]
	The $k$th order SOS relaxation for solving (\ref{1.1}) is therefore
	\begin{equation}\label{3.1}
	 \left\{ \baray{cl}
			\max & \gamma \\
			s.t. & f - \gamma \in \bigcap\limits_{l=1}^{m} \left[\text{Ideal}[c^{(l)}_{eq}]_{2k} + \text{QM}[c^{(l)}_{in}]_{2k}\right].
		\earay \right.
	\end{equation}
	The dual optimization of \reff{3.1} is then the moment relaxation
	\begin{equation}\label{3.2}
		\left\{ \baray{cl}
			\min & \langle f \; , \; y^{(1)}\rangle + \cdots + \langle f \; , \; y^{(m)}\rangle \\
			s.t. & \mathscr{V}_{c^{(l)}_i}^{(2k)}[y^{(l)}] = 0 ~ (i \in \mathcal{E}^{(l)}), \\
			& L_{c^{(l)}_j}^{(k)}[y^{(l)}] \succeq 0 ~ (j \in \mathcal{I}^{(l)}), \\
			& M_k[y^{(l)}] \succeq 0, \sum\limits_{l=1}^{m}y_0^{(l)}=1, \\
			&  y^{(l)} \in \mathbb{R}^{\mathbb{N}^n_{2k}},  l = 1, \ldots, m. \\
		\earay \right.
	\end{equation}
	The integer $k$ is called the \textit{relaxation order}. For $k = 1,2, \ldots$, the sequence of primal-dual pairs (\ref{3.1})-(\ref{3.2}) is called the \textit{unified} Moment-SOS hierarchy. For each $k$, we denote by $f_{sos, k}$ and $f_{mom, k}$ the optimal values of (\ref{3.1}) and (\ref{3.2}) respectively.
We remark that the moment relaxation \reff{3.2} can be equivalently written in terms of Riesz functional.
Let $\mathscr{L}^{(l)}$ denote the Riesz functional given by $y^{(l)}$, then \reff{3.2} is equivalent to
\begin{equation}\notag
\left\{ \baray{cl}
\min & \mathscr{L}^{(1)}(f) + \cdots + \mathscr{L}^{(m)}(f) \\
s.t. & \mathscr{L}^{(l)} = 0 \text{ on Ideal}[c^{(l)}_{eq}]_{2k}, \\
      & \mathscr{L}^{(l)} \ge 0 \text{ on QM}[c^{(l)}_{in}]_{2k}, \\
      & \mathscr{L}^{(1)}(1) + \cdots + \mathscr{L}^{(m)}(1)= 1, \\
      & l = 1, \ldots, m.
\earay \right.
\end{equation}

	\begin{prop}\label{prop 3.2}
		For each relaxation order $k$, it holds that
		\begin{equation}\label{bound}
			f_{sos,k} \leq f_{mom,k} \leq f_{min}.
		\end{equation}
		Moreover, both sequences $\{f_{sos,k}\}_{k=1}^{\infty}$ and $\{f_{mom,k}\}_{k=1}^{\infty}$ are monotonically increasing.
	\end{prop}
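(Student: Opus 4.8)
The plan is to split the chain \eqref{bound} into the two inequalities $f_{sos,k}\le f_{mom,k}$ and $f_{mom,k}\le f_{min}$, and then to treat the monotonicity of each sequence by an inclusion argument.

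\emph{Weak duality.} First I would take any $\gamma$ feasible for \eqref{3.1} and any tuple $(y^{(1)},\dots,y^{(m)})$ feasible for \eqref{3.2}, and show $\sum_{l=1}^m\langle f,y^{(l)}\rangle\ge\gamma$. For each fixed $l$, feasibility of $\gamma$ gives a decomposition $f-\gamma=p_l+q_l$ with $p_l\in\text{Ideal}[c^{(l)}_{eq}]_{2k}$ and $q_l\in\text{QM}[c^{(l)}_{in}]_{2k}$. By the remarks recorded in Section~\ref{sc:pre}, the constraints $\mathscr{V}^{(2k)}_{c^{(l)}_i}[y^{(l)}]=0$ force $\mathscr{L}^{(l)}=0$ on $\text{Ideal}[c^{(l)}_{eq}]_{2k}$, while $L^{(k)}_{c^{(l)}_j}[y^{(l)}]\succeq0$ together with $M_k[y^{(l)}]\succeq0$ force $\mathscr{L}^{(l)}\ge0$ on $\text{QM}[c^{(l)}_{in}]_{2k}$. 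Hence $\mathscr{L}^{(l)}(f-\gamma)=\mathscr{L}^{(l)}(p_l)+\mathscr{L}^{(l)}(q_l)\ge0$, i.e. $\langle f,y^{(l)}\rangle\ge\gamma\,y_0^{(l)}$. Summing over $l$ and using the coupling constraint $\sum_{l}y_0^{(l)}=1$ gives $\sum_l\langle f,y^{(l)}\rangle\ge\gamma$; taking the supremum over $\gamma$ and the infimum over $(y^{(l)})$ yields $f_{sos,k}\le f_{mom,k}$.

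\emph{Lower bound.} Here I would produce, for each feasible $x\in K$, a tuple feasible for \eqref{3.2} with objective value $f(x)$. Since $x\in\bigcup_l K_l$, I fix an index $l_0$ with $x\in K_{l_0}$, set $y^{(l_0)}=[x]_{2k}$ (the degree-$2k$ moment vector of the Dirac measure at $x$) and $y^{(l)}=0$ for $l\ne l_0$. The constraints at $l_0$ hold because $c^{(l_0)}_i(x)=0$, $c^{(l_0)}_j(x)\ge0$ and $M_k[y^{(l_0)}]=[x]_k[x]_k^T\succeq0$, while for $l\ne l_0$ the zero vector satisfies everything trivially; moreover $\sum_l y_0^{(l)}=1$. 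The objective equals $\langle f,[x]_{2k}\rangle=f(x)$, so $f_{mom,k}\le f(x)$ for every $x\in K$, and taking the infimum over $x\in K$ gives $f_{mom,k}\le f_{min}$. This requires no minimizer to exist, so it also covers the unbounded case.

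\emph{Monotonicity.} For the SOS side I would note that $\text{Ideal}[h]_{2k}\subseteq\text{Ideal}[h]_{2(k+1)}$ and $\text{QM}[g]_{2k}\subseteq\text{QM}[g]_{2(k+1)}$, so the feasible set of \eqref{3.1} enlarges with $k$ and thus $f_{sos,k}\le f_{sos,k+1}$. For the moment side I would argue in the opposite direction: if $(y^{(l)})$ is feasible for \eqref{3.2} at order $k+1$, then the truncations $y^{(l)}|_{2k}$ are feasible at order $k$ with the same objective, because the order-$k$ moment and localizing matrices and vectors are principal submatrices and subvectors of the order-$(k+1)$ ones and $\langle f,y^{(l)}\rangle$ is unchanged as soon as $\deg f\le 2k$. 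Hence every value attainable at order $k+1$ is already attainable at order $k$, giving $f_{mom,k}\le f_{mom,k+1}$. None of these steps is deep; the one point needing care is the coupling constraint $\sum_l y_0^{(l)}=1$, which replaces the usual normalization $y_0=1$ of a single-set relaxation: it is exactly what collapses the summed inequalities to $\ge\gamma$ in the duality step and what makes the single-Dirac tuple feasible in the lower-bound step, so the argument must track how unit mass is distributed across the $m$ sets rather than normalizing each $y^{(l)}$ separately.
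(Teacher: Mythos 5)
Your proof is correct and takes essentially the same route as the paper's: weak duality for $f_{sos,k}\le f_{mom,k}$, a Dirac-type feasible tuple concentrated on one set $K_{l'}$ for $f_{mom,k}\le f_{min}$, and feasible-set inclusion/truncation arguments for the monotonicity of both sequences. The only differences are expository—you spell out the weak duality and the truncation step that the paper leaves implicit, and your infimum over feasible $x$ replaces the paper's equivalent $\epsilon$-argument for a possibly unattained minimum.
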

	\begin{proof}
		By the weak duality, we have $f_{sos,k} \leq f_{mom,k}$. For every $\epsilon>0$, there exist $l' \in [m]$ and $u \in K_{l'}$ such that $f(u) \leq f_{min}+\epsilon$. Let $y \coloneqq (y^{(1)}, \ldots, y^{(m)})$ be such that $y^{(l')} = [u]_{2k}$ and $y^{(l)} = 0$ for all $l \in [m] \setminus \{l'\}$. Then, $y$ is feasible for (\ref{3.2}) and
		\[
		f_{mom,k} \leq \langle f\; , \; y^{(1)}\rangle + \cdots + \langle f\; , \; y^{(m)}\rangle = \langle f\; , \; y^{(l')}\rangle = f(u) \leq f_{min} + \epsilon.
		\]
		Since $\epsilon > 0$ can be arbitrary, $f_{mom,k} \leq f_{min}$. Therefore, we get \reff{bound}.
		Clearly, if $\gamma$ is feasible for (\ref{3.1}) with an order $k$, then $\gamma$ must also be feasible for (\ref{3.1}) with all larger values of $k$, since the feasible set gets larger as $k$ increases. So the sequence of lower bounds $\{f_{sos,k}\}_{k=1}^{\infty}$ is monotonically increasing. On the other hand, when $k$ increases, the feasible set of (\ref{3.2}) shrinks, so the minimum value of (\ref{3.2}) increases. Therefore, $\{f_{mom,k}\}_{k=1}^{\infty}$ is also monotonically increasing.
	\end{proof}

	\subsection{Extraction of minimizers}
	We show how to extract minimizers of (\ref{1.1}) from the unified moment relaxation.
	This is a natural extension from the case $m=1$ in Section~\ref{sub 2.1}.
	Suppose the tuple $y^* \coloneqq (y^{(*,1)}, \ldots, y^{(*,m)})$ is a minimizer of (\ref{3.2}). Denote the degree
	\[
	d_{l} \coloneqq \max_{i \in \mathcal{E}^{(l)} \cup \mathcal{I}^{(l)}} \{ \lceil \deg(c^{(l)}_{i})/2 \rceil \}.
	\]
	We can extract minimizers by checking the \textit{flat truncation} condition: there exists an integer $t \geq \max\limits_{l \in [m]}\{d_{l}, \deg(f)/2\}$ such that
	\begin{equation}\label{3.3}
		\text{rank}\,M_{t-d_l}[y^{(*,l)}] = \text{rank}\,M_t[y^{(*,l)}] \quad \text{for each} ~ l\in \mathcal{A},
	\end{equation}
	where the labeling set	
	\[
	\mathcal{A} \coloneqq \big\{l \in [m] : y_0^{(l)}>0\big\}.
	\]
	Interestingly, if (\ref{3.3}) holds, we can extract
	\be\label{rank r}
	r \coloneqq \sum_{l \in \mc{A}} \text{rank}\,M_t[y^{(*,l)}]
	\ee
	minimizers for the optimization problem (\ref{1.1}).

	\begin{alg}\label{alg}\rm
		To solve the polynomial optimization (\ref{1.1}), do the following:
		\begin{itemize}
			\item[Step~0] Let $k \coloneqq \max\limits_{l \in [m]}\{d_{l}, \lceil \deg(f)/2\rceil\}$.
			
			\item[Step~1] Solve the relaxation \reff{3.2}.
			If it is infeasible, output that (\ref{1.1}) is infeasible and stop. Otherwise, solve it for a minimizer $y^* \coloneqq (y^{(*,1)}, \ldots, y^{(*,m)})$.
			
			\item[Step~2] Check if the flat truncation (\ref{3.3}) holds or not. If \reff{3.3} holds, then the relaxation \reff{3.2} is tight and for each $l \in \mathcal{A}$, the truncation $y^{(*,l)}|_{2t}$ admits a finitely atomic measure $\mu^{(l)}$ such that each point in $supp(\mu^{(l)})$ is a minimizer of (\ref{1.1}). Moreover, $f_{min} = f_{mom,k}$.
			
			\item [Step~3] If \reff{3.3} fails, let $k \coloneqq k+1$ and go to Step~1.
		\end{itemize}
	\end{alg}

	The conclusion in Step~2 is justified by the following.

	\begin{theorem}\label{lem 3.5}
		Let $y^* \coloneqq (y^{(*,1)}, \ldots, y^{(*,m)})$ be a minimizer of (\ref{3.2}). Suppose (\ref{3.3}) holds for all $l\in\mathcal{A}$. Then, the moment relaxation (\ref{3.2}) is tight and for each $l \in \mc{A}$, the truncation
		\[
		y^{(*,l)}|_{2t} \coloneqq (y^{(*,l)}_{\alpha})_{\alpha \in \mathbb{N}^n_{2t}}
		\]
		admits a $r_l$-atomic measure $\mu^{(l)}$, where $r_l = \text{\normalfont rank}\, M_t[y^{(*,l)}]$, and each point in $supp(\mu^{(l)})$ is a minimizer of (\ref{1.1}). Therefore, the total number of minimizers is $r$ as in \reff{rank r}.
	\end{theorem}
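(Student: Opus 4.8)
The plan is to reduce the statement to the single-set flat-truncation theorem (Theorem~\ref{FT}) applied componentwise, and then to verify that the atoms thus produced are genuine minimizers of the union problem \reff{1.1}. The key observation is that, for each fixed $l \in \mathcal{A}$, the constraints of \reff{3.2} restricted to the block $y^{(*,l)}$ look almost exactly like the single-set moment relaxation of \reff{1.2} over $K_l$, except that the normalization $y_0^{(l)}=1$ has been replaced by the joint normalization $\sum_l y_0^{(l)}=1$. So for $l \in \mathcal{A}$ I would set $t_l \coloneqq y_0^{(*,l)} = \mathscr{L}^{(l)}(1) > 0$ and pass to the rescaled tms $\hat{y}^{(l)} \coloneqq y^{(*,l)}/t_l$, which satisfies $\hat{y}^{(l)}_0 = 1$ together with $\mathscr{V}_{c^{(l)}_i}^{(2k)}[\hat{y}^{(l)}]=0$, $L_{c^{(l)}_j}^{(k)}[\hat{y}^{(l)}]\succeq 0$, and $M_k[\hat{y}^{(l)}]\succeq 0$. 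Since rank is invariant under positive scaling, the hypothesis \reff{3.3} gives $\text{rank}\,M_{t-d_l}[\hat{y}^{(l)}] = \text{rank}\,M_t[\hat{y}^{(l)}]$, so Theorem~\ref{FT} applies to $\hat{y}^{(l)}$.

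Applying Theorem~\ref{FT} to each $\hat{y}^{(l)}$ ($l \in \mathcal{A}$) yields $r_l = \text{rank}\,M_t[\hat{y}^{(l)}] = \text{rank}\,M_t[y^{(*,l)}]$ distinct points $u^{(l)}_1,\ldots,u^{(l)}_{r_l}$ that are feasible for the single-set problem over $K_l$ (hence feasible for \reff{1.1}) and positive weights $\hat{\lambda}^{(l)}_s$ with $\hat{y}^{(l)}|_{2t} = \sum_s \hat{\lambda}^{(l)}_s [u^{(l)}_s]_{2t}$. Multiplying back by $t_l$ gives the desired $r_l$-atomic representing measure $\mu^{(l)} \coloneqq \sum_s t_l\hat{\lambda}^{(l)}_s \,\delta_{u^{(l)}_s}$ for $y^{(*,l)}|_{2t}$, whose support consists of $K_l$-feasible points. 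The remaining and central task is to upgrade ``feasible'' to ``globally optimal'' for \reff{1.1}, and simultaneously to establish tightness $f_{mom,k}=f_{min}$.

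The main obstacle is precisely this optimality argument, since Theorem~\ref{FT} only guarantees feasibility of the atoms, not minimality. Here I would exploit that $y^*$ is a \emph{minimizer} of \reff{3.2}. Concatenating the atomic expansions over all $l \in \mathcal{A}$ and using $\langle f, y^{(*,l)}\rangle = \sum_s t_l\hat{\lambda}^{(l)}_s f(u^{(l)}_s)$, the objective value becomes a convex combination
\[
f_{mom,k} = \sum_{l \in \mathcal{A}} \sum_{s=1}^{r_l} t_l\hat{\lambda}^{(l)}_s \, f(u^{(l)}_s),
\]
where the total mass $\sum_{l \in \mathcal{A}}\sum_s t_l\hat{\lambda}^{(l)}_s = \sum_{l \in \mathcal{A}} t_l = \sum_l y_0^{(*,l)} = 1$ by the normalization constraint. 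Since each $u^{(l)}_s$ is feasible for \reff{1.1}, every term satisfies $f(u^{(l)}_s) \ge f_{min}$, so this convex combination is $\ge f_{min}$; combined with $f_{mom,k} \le f_{min}$ from Proposition~\ref{prop 3.2}, we get $f_{mom,k}=f_{min}$ and tightness. Equality in the convex combination then forces $f(u^{(l)}_s) = f_{min}$ for every atom carrying positive weight, so each $u^{(l)}_s$ is a global minimizer of \reff{1.1}. Finally, distinctness of the atoms across different blocks follows because atoms in $\text{supp}(\mu^{(l)})$ lie in $K_l$ while those from $K_{l'}$ need not, and within each block distinctness is supplied by Theorem~\ref{FT}; one should note that the same global minimizer could in principle appear in two different $K_l$'s, so the count $r = \sum_{l \in \mathcal{A}} r_l$ in \reff{rank r} is the total number of atoms across blocks rather than the number of geometrically distinct minimizers, and I would state the conclusion accordingly.
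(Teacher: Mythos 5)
Your proposal is correct and follows essentially the same route as the paper: extract atoms blockwise via Theorem~\ref{FT}, use the joint normalization $\sum_l y_0^{(*,l)}=1$ to write $f_{mom,k}$ as a convex combination of values $f(u_s^{(l)})\ge f_{min}$, and sandwich against $f_{mom,k}\le f_{min}$ to force tightness and optimality of every atom. Your explicit rescaling $\hat{y}^{(l)}=y^{(*,l)}/y_0^{(*,l)}$ before invoking Theorem~\ref{FT} (the paper applies it to the unnormalized block directly) and your caveat that atoms shared by several $K_l$ could make $r$ an overcount of geometrically distinct minimizers are minor refinements, not a different argument.
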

	\begin{proof}
		By the assumption, the $y^{(*,l)} \in \mathbb{R}^{\mathbb{N}^n_{2k}}$ satisfies (\ref{3.3}) and
		\[
		L_{c^{(l)}_j}^{(k)}[y^{(*,l)}] \succeq 0 ~ (j \in \mathcal{I}^{(l)}), \quad M_k[y^{(*,l)}] \succeq 0.
		\]
		Then, by Theorem \ref{FT}, there exists $r_l$ distinct points $u_1^{(l)}, \ldots, u_{r_l}^{(l)} \in K_l$ and positive scalars $\lambda_1^{(l)}, \ldots, \lambda_{r_l}^{(l)}$ such that
		\[
		y^{(*,l)}|_{2t} = \lambda_1^{(l)}[u_1^{(l)}]_{2t} + \cdots + \lambda_{r_l}^{(l)}[u_{r_l}^{(l)}]_{2t}.
		\]
		The constriant $\sum\limits_{l=1}^{m} y_0^{(l)} = 1$ implies that $\sum\limits_{l=1}^{m} \sum\limits_{i=1}^{r_l} \lambda_i^{(l)} = 1$, so
		\[
		\sum\limits_{l=1}^{m} \sum\limits_{i=1}^{r_l} \lambda_i^{(l)}f(u_i^{(l)}) = \sum\limits_{l=1}^{m} \langle f, y^{(*, l)}|_{2t} \rangle = \sum\limits_{l=1}^{m} \langle f, y^{(*, l)} \rangle = f_{mom,k} \leq f_{min}.
		\]
		For each $u_i^{(l)} \in K_l$, we have $f(u_i^{(l)})  \geq f_{min}$, so
		\[
		\sum\limits_{l=1}^{m} \sum\limits_{i=1}^{r_l} \lambda_i^{(l)}f(u_i^{(l)}) \geq \sum\limits_{l=1}^{m} \sum\limits_{i=1}^{r_l} \lambda_i^{(l)} f_{min} = f_{min}.
		\]
		Hence, $f_{mom,k} = f_{min}$ and
		 \[
		 \sum\limits_{l=1}^{m} \sum\limits_{i=1}^{r_l} \lambda_i^{(l)} \left[ f(u_i^{(l)}) - f_{min} \right] = 0.
		 \]
		 Since each $\lambda_i^{(l)} > 0$, then each $f(u_i^{(l)})  = f_{min}$, i.e., each $u_i^{(l)}$ is a minimizer of (\ref{1.1}).
	\end{proof}
	
	In Step~2, the flat truncation condition (\ref{3.3}) is used to extract minimizers. When it holds, a numerical method is given in \cite{henrion2005detecting} for computing the minimizers. We refer to \cite[Section~2.7]{nie2023moment} for more details.

	\subsection{Convergence analysis}

	Recall that $f_{min}, f_{sos,k}$ and $f_{mom,k}$ denote the optimal values of (\ref{1.1}), (\ref{3.1}) and (\ref{3.2}) respectively. The unified Moment-SOS hierarchy (\ref{3.1})-(\ref{3.2}) is said to have \textit{asymptotic convergence} if $f_{sos,k} \to f_{min}$ as $k \to \infty$. If $f_{sos,k} = f_{min}$ for some $k$, this unified hierarchy is said to be \textit{tight} or to have \textit{finite convergence}.
	The following theorem is a natural extension from the case $m=1$.

	\begin{theorem}[Asymptotic convergence]\label{thm 3.6}
		If $\text{\normalfont Ideal}[c_{eq}^{(l)}] + \text{\normalfont QM}[c_{in}^{(l)}]$ is archimedean for every $l = 1, \ldots, m$, then the Moment-SOS hierarchy (\ref{3.1})-(\ref{3.2}) has asymptotic convergence:
		\[
		\lim_{k \to \infty}f_{sos,k} = \lim_{k \to \infty}f_{mom,k} = f_{min}.
		\]
	\end{theorem}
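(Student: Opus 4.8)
The plan is to combine the ordering and monotonicity already recorded in Proposition \ref{prop 3.2} with Putinar's Positivstellensatz (Theorem \ref{thm 2.1}) applied separately on each piece $K_l$. Proposition \ref{prop 3.2} already supplies $f_{sos,k} \le f_{mom,k} \le f_{min}$ together with monotonicity of both sequences, so both limits exist and are at most $f_{min}$; it therefore suffices to prove that $\lim_{k\to\infty} f_{sos,k} \ge f_{min}$, i.e.\ that for every $\epsilon > 0$ the scalar $\gamma \coloneqq f_{min} - \epsilon$ becomes feasible for the SOS relaxation \reff{3.1} once the order $k$ is large enough. The key observation is that, because $K_l \subseteq K$ for each $l$, we have $f \ge f_{min}$ on every $K_l$, and hence $f - \gamma = (f - f_{min}) + \epsilon \ge \epsilon > 0$ on each $K_l$.

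Next I would invoke Theorem \ref{thm 2.1} on each piece. Strictly speaking that theorem is stated for a quadratic module $\text{QM}[g]$ and its nonnegativity set $S(g)$, whereas each $K_l$ is cut out by both the equalities $c^{(l)}_{eq}$ and the inequalities $c^{(l)}_{in}$. To bridge this I would pass to the enlarged tuple $g_l \coloneqq (c^{(l)}_{in}, c^{(l)}_{eq}, -c^{(l)}_{eq})$, for which $S(g_l) = K_l$ and, since every polynomial $p = \big(\tfrac{1+p}{2}\big)^2 - \big(\tfrac{1-p}{2}\big)^2$ is a difference of two SOS polynomials, $\text{QM}[g_l] = \text{Ideal}[c^{(l)}_{eq}] + \text{QM}[c^{(l)}_{in}]$. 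The archimedean hypothesis is precisely that this module is archimedean, so Theorem \ref{thm 2.1} applies to the strictly positive polynomial $f - \gamma$ and yields $f - \gamma \in \text{Ideal}[c^{(l)}_{eq}] + \text{QM}[c^{(l)}_{in}]$ for each $l$.

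Each such membership is witnessed by a representation with finitely many terms, so there is an order $k_l$ with $f - \gamma \in \text{Ideal}[c^{(l)}_{eq}]_{2k_l} + \text{QM}[c^{(l)}_{in}]_{2k_l}$. Setting $k \coloneqq \max_{l \in [m]} k_l$ and using that the truncated ideal and quadratic module only grow with $k$, I obtain $f - \gamma \in \bigcap_{l=1}^m \big[ \text{Ideal}[c^{(l)}_{eq}]_{2k} + \text{QM}[c^{(l)}_{in}]_{2k} \big]$, so $\gamma = f_{min} - \epsilon$ is feasible for \reff{3.1} at order $k$ and thus $f_{sos,k} \ge f_{min} - \epsilon$. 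Letting $\epsilon \to 0$ and invoking monotonicity gives $\lim_{k\to\infty} f_{sos,k} = f_{min}$, and the sandwich $f_{sos,k} \le f_{mom,k} \le f_{min}$ then forces $\lim_{k\to\infty} f_{mom,k} = f_{min}$ as well.

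The only delicate points I anticipate are the reduction of the mixed equality/inequality description to a single quadratic module in order to legitimately apply Theorem \ref{thm 2.1} (handled above via the tuple $g_l$), and the degenerate possibility that some $K_l = \emptyset$: in that case $S(g_l) = \emptyset$, the positivity of $f - \gamma$ on $K_l$ holds vacuously, and the archimedean module already contains $f - \gamma$ (indeed it equals all of $\mathbb{R}[x]$), so the argument is unaffected. For completeness I would also note that the archimedean hypothesis forces each $K_l$, and hence $K$, to be compact, which guarantees that $f_{min}$ is attained and finite whenever $K \ne \emptyset$.
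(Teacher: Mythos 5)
Your proposal is correct and follows essentially the same route as the paper's own proof: positivity of $f - (f_{min}-\epsilon)$ on each $K_l$, Putinar's Positivstellensatz applied piecewise under the archimedean hypothesis, a finite truncation order making $\gamma = f_{min}-\epsilon$ feasible for \reff{3.1}, and the sandwich from Proposition \ref{prop 3.2}. The extra care you take—reducing the mixed equality/inequality description to a single quadratic module via the tuple $g_l \coloneqq (c^{(l)}_{in}, c^{(l)}_{eq}, -c^{(l)}_{eq})$ and handling the case $K_l = \emptyset$—is sound added rigor on points the paper leaves implicit, not a different argument.
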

	\begin{proof}
		For $\epsilon>0$, let $\gamma = f_{min} - \epsilon$. Then
		\[
		f(x) - \gamma = f(x) - f_{min} + \epsilon > 0
		\]
		 on $K_l$. Since $\text{\normalfont Ideal}[c_{eq}^{(l)}] + \text{\normalfont QM}[c_{in}^{(l)}]$ is archimedean for every $l$, by Theorem \ref{thm 2.1},
		 \[
		 f(x) - \gamma \in \text{\normalfont Ideal}[c_{eq}^{(l)}]_{2k} + \text{\normalfont QM}[c_{in}^{(l)}]_{2k}
		 \]
		for all $k$ large enough. So
		\[
		f_{min} - \epsilon = \gamma \leq f_{sos,k} \leq f_{min} \implies f_{min} - \epsilon \leq \lim\limits_{k \to \infty}f_{sos,k} \leq f_{min}.
		\]
		Since $\epsilon > 0$ can be arbitrary, $\lim\limits_{k \to \infty}f_{sos,k} = f_{min}$. By \reff{bound}, we get the desired conclusion.
	\end{proof}

	Recall the linear independence constraint qualification condition (LICQC), the strict complementarity condition (SCC), and the second order sufficient condition (SOSC) introduced in Subsection \ref{ssc:opcd}. The following is the conclusion for the finite convergence of the unified Moment-SOS hierarchy of (\ref{3.1})-(\ref{3.2}).

	\begin{theorem}[Finite convergence]\label{thm 3.7}
		Assume $\text{\normalfont Ideal}[c_{eq}^{(l)}] + \text{\normalfont QM}[c_{in}^{(l)}]$ is archimedean for every $l = 1, \ldots, m$. If the LICQC, SCC, and SOSC hold at every global minimizer of (\ref{1.1}) for each $K_l$, then the Moment-SOS hierarchy (\ref{3.1})-(\ref{3.2}) has finite convergence, i.e.,
		\[
		f_{sos,k} = f_{mom, k} = f_{min}
		\]
		for all $k$ large enough.
	\end{theorem}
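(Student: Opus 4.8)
The plan is to reduce the finite convergence of the unified hierarchy to the finite convergence of the standard single-set Moment-SOS hierarchy applied to each $K_l$ separately, which is already established under the optimality conditions (the result cited as \cite{NieOPCD} in the introduction). The key structural observation is that $f_{min} = \min_{l \in [m]} f_{min}^{(l)}$, where $f_{min}^{(l)} \coloneqq \min\{f(x) : x \in K_l\}$; consequently $f - f_{min} \geq 0$ on every $K_l$. The whole argument then hinges on producing, for each $l$, a finite order $k_l$ such that
\[
f - f_{min} \in \text{Ideal}[c_{eq}^{(l)}]_{2k_l} + \text{QM}[c_{in}^{(l)}]_{2k_l}.
\]
Once this membership holds for all $l$ at a common order $k \coloneqq \max_l k_l$, the scalar $\gamma = f_{min}$ becomes feasible for \reff{3.1}, so $f_{sos,k} \geq f_{min}$. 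Combined with the bound \reff{bound} from Proposition~\ref{prop 3.2}, namely $f_{sos,k} \leq f_{mom,k} \leq f_{min}$, this forces $f_{sos,k} = f_{mom,k} = f_{min}$; monotonicity (again from Proposition~\ref{prop 3.2}) then propagates the equality to all larger orders.

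To produce these certificates I would split $[m]$ according to whether $K_l$ is \emph{active}, i.e.\ whether $f_{min}^{(l)} = f_{min}$. For indices $l$ with $f_{min}^{(l)} > f_{min}$, the polynomial $f - f_{min}$ is strictly positive on $K_l$; since $\text{Ideal}[c_{eq}^{(l)}] + \text{QM}[c_{in}^{(l)}]$ is archimedean, Putinar's Positivstellensatz (Theorem~\ref{thm 2.1}) places $f - f_{min}$ in the module, and truncating at a finite degree supplies the desired $k_l$. For the active indices, with $f_{min}^{(l)} = f_{min}$, the global minimizers of $f$ over $K_l$ are precisely the global minimizers of \reff{1.1} that happen to lie in $K_l$; at each such point the hypotheses LICQC, SCC, and SOSC hold by assumption. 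Hence the single-set finite convergence theorem \cite{NieOPCD} applies to the problem $\min_{x \in K_l} f(x)$ and yields a finite order $k_l$ with $f - f_{min}^{(l)} = f - f_{min}$ in the truncated ideal-plus-module.

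The main obstacle lies in the active case, though it is less a new difficulty than a careful invocation: the heavy lifting---showing that LICQC, SCC, and SOSC at every minimizer force the exact finite-degree representation $f - f_{min}^{(l)} \in \text{Ideal}[c_{eq}^{(l)}]_{2k_l} + \text{QM}[c_{in}^{(l)}]_{2k_l}$---is exactly the content of the cited single-set result. The two points I must verify with care are: (i) that the optimality hypotheses, stated for the union problem \reff{1.1}, transfer verbatim to the single-set problem $\min_{x \in K_l} f(x)$, which they do, since the active constraint set and gradient data at a point depend only on the constraints defining $K_l$, and a global minimizer of \reff{1.1} lying in $K_l$ is automatically a global (hence local) minimizer of $f$ over $K_l$; and (ii) that on inactive sets the hypothesis is genuinely vacuous, so no optimality condition is silently demanded at a point where it might fail---consistent with handling those sets through strict positivity and Theorem~\ref{thm 2.1}. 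Setting $k \geq \max_l k_l$ then closes the argument.
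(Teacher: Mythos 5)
Your proposal is correct and follows essentially the same route as the paper's proof: the same partition of $[m]$ into indices with $f_{min,l} = f_{min}$ and those with $f_{min,l} > f_{min}$, Putinar's theorem (Theorem~\ref{thm 2.1}) for the latter, the single-set result of \cite{NieOPCD} for the former, and the same closing step via Proposition~\ref{prop 3.2} and monotonicity. The one place where the paper is more careful is the active case: it invokes only the $\epsilon$-uniform certificate --- a single degree $k_0$ such that $f - (f_{min} - \epsilon) \in \text{Ideal}[c^{(l)}_{eq}]_{2k_0} + \text{QM}[c^{(l)}_{in}]_{2k_0}$ for every $\epsilon > 0$, which is literally what finite convergence of the single-set hierarchy means --- whereas your exact membership $f - f_{min} \in \text{Ideal}[c^{(l)}_{eq}]_{2k_l} + \text{QM}[c^{(l)}_{in}]_{2k_l}$ additionally asserts attainment of the SOS bound, a statement stronger than ``$f_{sos,k} = f_{min}$'' that does hold here (it is what the proof of Theorem~1.1 of \cite{NieOPCD} actually establishes, via Marshall's boundary Hessian condition argument) but is not the literal content of ``finite convergence,'' so the $\epsilon$-form used by the paper is the safer citation.
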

	\begin{proof}
		We denote by $f_{min,l}$ the minimum value of $f$ on the set $K_l$. Let
		\[
		\mathcal{B} \coloneqq \{l : f_{min,l} = f_{min}\}.
		\]
		\begin{enumerate}[(i)]
			\item For the case $l \notin \mathcal{B}$, $f_{min,l} > f_{min}$,
			\[
			f(x) - f_{min} \geq f_{min,l} - f_{min} > 0
			\]
			on $K_l$. Since $\text{\normalfont Ideal}[c_{eq}^{(l)}] + \text{\normalfont QM}[c_{in}^{(l)}]$ is archimedean, there exists $k_0$ such that
			\[
			f- (f_{min} - \epsilon) \in \text{\normalfont Ideal}[c_{eq}^{(l)}]_{2k_0} + \text{\normalfont QM}[c_{in}^{(l)}]_{2k_0}
			\]
			for all $\epsilon>0$.
			\item For the case $l \in \mathcal{B}$, $f_{min,l} = f_{min}$.
			Since the LICQC, SCC, and SOSC hold at every global minimizer $x^*$ of (\ref{1.1}),
			there exists a degree $k_0$ such that for all $\eps>0$, we have
			\[
			f - (f_{min} - \epsilon) \in \text{\normalfont Ideal}[c_{eq}^{(l)}]_{2k_0} + \text{\normalfont QM}[c_{in}^{(l)}]_{2k_0}.
			\]
			This is shown in the proof of Theorem~1.1 of \cite{NieOPCD}.
		\end{enumerate}
			Combining cases (i) and (ii), we know that $\gamma = f_{min} - \epsilon$ is feasible for (\ref{3.1}) with the order $k_0$. Hence, $f_{sos,k_0} \geq \gamma = f_{min} - \epsilon$. Since $\epsilon > 0$ can be arbitrary, we get $f_{sos,k_0} \geq f_{min}$. By Proposition \ref{prop 3.2}, we get $f_{sos,k} = f_{mom, k} = f_{min}$ for all $k \geq k_0$.
	\end{proof}

	\section{Univariate Polynomial Optimization}\label{sc:n=1}
	In this section, we consider the special case of univariate polynomial optimization, i.e., $n=1$.
The following results for the univariate case are extensions from the single interval case, and are presented here to provide a complete and thorough understanding for convenience of readers.
	The problem (\ref{1.1}) can be expressed as
	\begin{equation}\label{5.1}
		\left\{ \baray{cl}
			\min & f(x) \coloneqq f_0 + f_1x + \cdots + f_{d}x^{d}\\
			s.t. & x \in \bigcup\limits_{l=1}^m K_l,
		\earay \right.
	\end{equation}
	where $K_l= [a_l,b_l]$ with $a_l<b_l$ for $l = 1, \ldots, m$. We still denote by $f_{min}$ the minimum value of (\ref{5.1}). For convenience, we only consider compact intervals. The discussions for unbounded intervals like $(-\infty, b]$ and $[a, +\infty)$ are similar (see \cite[Chapter~3]{nie2023moment}).

	Let $y \coloneqq (y_0, \ldots, y_d) \in \mathbb{R}^{d+1}$ be a univariate tms of degree $d$ with $d=2d_0+1$ or $d=2d_0$. The $d_0$th order moment matrix of $y$ is
	\[
	M_{d_0}[y] =
	\begin{bmatrix}
		y_0 & y_1 & \cdots & y_{d_0} \\
		y_1 & y_2 & \cdots & y_{d_0+1} \\
		\vdots & \vdots & \ddots & \vdots \\
		y_{d_0} & y_{d_0+1} & \cdots & y_{2d_0} \\
	\end{bmatrix}.
	\]
	For convenience of notation, we also denote that
	\[
	G_{d_0}[y] \coloneqq
	\begin{bmatrix}
		y_2 & y_3 & \cdots & y_{d_0+1} \\
		y_3 & y_4 & \cdots & y_{d_0+2} \\
		\vdots & \vdots & \ddots & \vdots \\
		y_{d_0+1} & y_{d_0+2} & \cdots & y_{2d_0} \\
	\end{bmatrix},
	\]
	\[
	N_{d_0}[y] \coloneqq
	\begin{bmatrix}
		y_1 & y_2 & \cdots & y_{d_0+1} \\
		y_2 & y_3 & \cdots & y_{d_0+2} \\
		\vdots & \vdots & \ddots & \vdots \\
		y_{d_0+1} & y_{d_0+2} & \cdots & y_{2d_0+1} \\
	\end{bmatrix}.
	\]
	
	It is well-known that polynomials that are nonnegative over an interval can be expressed in terms of sum of squares. The following results were known to Lukács \cite{lukacs1918verscharfung}, Markov \cite{markov1948lectures}, Pólya and Szegö \cite{polya1972problems}, Powers and Reznick \cite{powers2000polynomials}.
	For each $h \in \re[x]_d$ that is nonnegative on the interval $[a_l, b_l]$, we have:
		\begin{enumerate}[(i)]
		\item If $d=2d_0+1$ is odd, then there exist $p,q \in \mathbb{R}[x]_{d_0}$ such that
		\be\label{sos odd}
			h=(x-a_l)p^{2}+(b_l-x)q^{2}.
		\ee
		\item If $d=2d_0$ is even, then there exist $p \in \mathbb{R}[x]_{d_0}, q \in \mathbb{R}[x]_{d_0-1}$ such that
		\be\label{sos even}
			h=p^{2}+(x-a_l)(b_l-x)q^{2}.
		\ee
		\end{enumerate}
		The optimization problem (\ref{5.1}) can be solved by the unified Moment-SOS hierarchy of \reff{3.1}-\reff{3.2}. For the univariate case, they can be simplified. We discuss in two different cases of $d$.

	\subsection{The case $d$ is odd ($d=2d_0+1$)}
	When the degree $d=2d_0+1$ is odd, by the representation \reff{sos odd}, $f_{min}$ equals the maximum value of the SOS relaxation
	\begin{equation}\label{5.2}
		\left\{ \baray{cl}
			\max & \gamma \\
			s.t. & f-\gamma=(x-a_l)[x]_{d_0}^{T}X_{l}[x]_{d_0} + (b_l-x)[x]_{d_0}^{T}Y_{l}[x]_{d_0}, \\
			& X_{l} \in S_{+}^{d_0+1}, Y_{l} \in S_{+}^{d_0+1}, l = 1, \ldots, m.
		\earay \right.
	\end{equation}
	Its dual optimization is the moment relaxation
	\begin{equation}\label{5.3}
		\left\{ \baray{cl}
			\min & \langle f\; , \; y^{(1)}\rangle + \cdots + \langle f\; , \; y^{(m)}\rangle \\
			s.t. & y_0^{(1)} + \cdots + y_0^{(m)} = 1, \\
			& b_lM_{d_0}[y^{(l)}] \succeq N_{d_0}[y^{(l)}] \succeq a_lM_{d_0}[y^{(l)}], \\
			& y^{(l)}=(y_{0}^{(l)}, y_{1}^{(l)}, \ldots, y_{2d_0+1}^{(l)}), \\
			&  l = 1, \ldots, m.
		\earay \right.
	\end{equation}
	In the above,
	\[
	\langle f\; , \; y^{(l)}\rangle = f_0y_0^{(l)} + \cdots + f_{2d_0+1}y_{2d_0+1}^{(l)}.
	\]
	Denote by $f_{sos}$ and $f_{mom}$ the optimal values of (\ref{5.2}) and (\ref{5.3}) respectively. For all $(X_l, Y_l)$ that is feasible for (\ref{5.2}) and for all $y^{(l)}$ that is feasible for (\ref{5.3}), we have
	\[
	\langle f\; , \; y^{(1)}\rangle + \cdots + \langle f\; , \; y^{(m)}\rangle \geq \gamma.
	\]
	This is because
	\[
	b_lM_{d_0}[y^{(l)}] - N_{d_0}[y^{(l)}] = L_{b_l-x}^{(d_0+1)}[y^{(l)}] \succeq 0,
	\]
	\[
	N_{d_0}[y^{(l)}] - a_lM_{d_0}[y^{(l)}] = L_{x-a_l}^{(d_0+1)}[y^{(l)}] \succeq 0,
	\]
	which implies that
	\begin{align*}
		&~ \quad  \langle f\; , \; y^{(1)}\rangle + \cdots + \langle f\; , \; y^{(m)}\rangle - \gamma \\
		&= \langle f-\gamma\; , \; y^{(1)}\rangle + \cdots + \langle f-\gamma\; , \; y^{(m)}\rangle \\
		&= \sum\limits_{l=1}^m \left[ \left\langle L_{x-a_l}^{(d_0+1)}[y^{(l)}]\; , \; X_l \right\rangle + \left\langle L_{b_l-x}^{(d_0+1)}[y^{(l)}]\; , \; Y_l \right\rangle \right] \\
		&\geq 0.
	\end{align*}
	\noindent Indeed, we have the following theorem.

	\begin{theorem}\label{thm 5.3}
		For the relaxations (\ref{5.2}) and (\ref{5.3}), we always have
		\[
		f_{sos} = f_{mom} = f_{min}.
		\]
	\end{theorem}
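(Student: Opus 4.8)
The plan is to prove the two inequalities $f_{sos} \ge f_{min}$ and $f_{mom} \le f_{min}$ and then combine them with the weak-duality bound $f_{sos} \le f_{mom}$ that was just established before the statement; together these three facts force $f_{sos} = f_{mom} = f_{min}$.

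The lower bound $f_{sos} \ge f_{min}$ is where the univariate structure does all the work. Writing $f_{min,l}$ for the minimum of $f$ on $[a_l,b_l]$, we have $f_{min} = \min_l f_{min,l}$, so the polynomial $f - f_{min}$ is nonnegative on \emph{every} interval $[a_l, b_l]$ and has odd degree $d = 2d_0 + 1$. I would then invoke the exact representation \reff{sos odd}: for each $l$ there exist $p_l, q_l \in \mathbb{R}[x]_{d_0}$ with
\[
f - f_{min} = (x - a_l)\,p_l^2 + (b_l - x)\,q_l^2 .
\]
Taking $X_l = \text{vec}(p_l)\text{vec}(p_l)^T$ and $Y_l = \text{vec}(q_l)\text{vec}(q_l)^T$ (both in $S_+^{d_0+1}$), the triple $(\gamma, \{X_l\}, \{Y_l\})$ with $\gamma = f_{min}$ is feasible for \reff{5.2}. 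Hence $f_{sos} \ge f_{min}$, and the maximum is attained.

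For the upper bound $f_{mom} \le f_{min}$ I would reuse the point-mass construction from Proposition~\ref{prop 3.2}. Given $\epsilon > 0$, choose $l'$ and $u \in K_{l'}$ with $f(u) \le f_{min} + \epsilon$, and set $y^{(l')} = [u]_d$ (the moment sequence of the Dirac mass at $u$) and $y^{(l)} = 0$ for $l \neq l'$. For a Dirac mass one has $N_{d_0}[y^{(l')}] = u\,M_{d_0}[y^{(l')}]$, so the interval constraints reduce to $(b_{l'} - u)M_{d_0}[y^{(l')}] \succeq 0$ and $(u - a_{l'})M_{d_0}[y^{(l')}] \succeq 0$, both of which hold because $u \in [a_{l'}, b_{l'}]$ and $M_{d_0}[y^{(l')}] \succeq 0$; the normalization $\sum_l y_0^{(l)} = 1$ is clear. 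Thus $y$ is feasible for \reff{5.3} with objective value $f(u) \le f_{min} + \epsilon$, and letting $\epsilon \to 0$ gives $f_{mom} \le f_{min}$.

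The one genuinely essential ingredient is the exactness of \reff{sos odd}: because it certifies nonnegativity on an interval with \emph{matching} degree bounds, no strict-positivity or limiting argument is needed and $\gamma = f_{min}$ itself is already feasible, which is precisely what upgrades the merely asymptotic statement of Theorem~\ref{thm 3.6} to an exact equality in the univariate case. Beyond correctly translating the decomposition \reff{sos odd} into the Gram-matrix form appearing in \reff{5.2} and verifying the Dirac-mass identity $N_{d_0} = u M_{d_0}$, the argument is routine bookkeeping, and I do not anticipate any real obstacle.
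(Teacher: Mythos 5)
Your proof is correct and follows essentially the same route as the paper: the Luk\'acs representation \reff{sos odd} shows $\gamma = f_{min}$ is feasible for \reff{5.2}, and weak duality together with the Dirac-mass bound $f_{mom} \le f_{min}$ closes the chain. The only difference is that you spell out details the paper leaves implicit, namely the rank-one Gram matrices $X_l, Y_l$ and the verification $N_{d_0}[[u]_d] = u\,M_{d_0}[[u]_d]$ behind the inequality $f_{mom} \le f_{min}$, which the paper subsumes under ``weak duality'' in the spirit of Proposition~\ref{prop 3.2}.
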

	\begin{proof}
		By the representation \reff{sos odd}, for $\gm=f_{min}$, the subtraction $f - f_{min}$ can be represented as in \reff{5.2} for each $l = 1, \ldots, m$, so $f_{sos} = f_{min}$. By the weak duality, we have $f_{sos} \leq f_{mom} \leq f_{min}$. Hence, they are all equal.
	\end{proof}
	
	The optimizers for \reff{5.1} can be obtained by the following algorithm.
	
	\begin{alg}\cite[Algorithm 3.3.6]{nie2023moment}
		\label{alg 5.4}\rm
		Assume $d=2d_0+1$ and
		$(y^{(1)}, \ldots, y^{(m)})$ is a minimizer for the moment relaxation (\ref{5.3}).
		For each $l$ with $y_0^{(l)} > 0$ and $r=\text{\normalfont rank}\,M_{d_0}[y^{(l)}]$, do the following:
		\begin{itemize}
			\item[Step~1] Solve the linear system
			$$\begin{bmatrix}
				y_0^{(l)} & y_1^{(l)} & \cdots & y_{r-1}^{(l)} \\
				y_1^{(l)} & y_2^{(l)} & \cdots & y_r^{(l)} \\
				\vdots & \vdots & \ddots & \vdots \\
				y_{2d_0-r+1}^{(l)} & y_{2d_0-r+2}^{(l)} & \cdots & y_{2d_0}^{(l)}\\
			\end{bmatrix}
			\begin{bmatrix}
				g_0^{(l)}  \\
				g_1^{(l)} \\
				\vdots \\
				g_{r-1}^{(l)} \\
			\end{bmatrix} =
			\begin{bmatrix}
				y_r^{(l)}  \\
				y_{r+1}^{(l)} \\
				\vdots \\
				y_{2d_0+1}^{(l)} \\
			\end{bmatrix}.$$
			\item[Step~2] Compute $r$ distinct roots $t_1^{(l)}, \ldots, t_r^{(l)}$ of the polynomial
			\[
			g^{(l)}(x) \coloneqq g_0^{(l)}+g_1^{(l)}x+\cdots+g_{r-1}^{(l)}x^{r-1}-x^r.
			\]
			\item[Step~3] The roots $t_1^{(l)}, \ldots, t_r^{(l)}$ are minimizers of the optimization problem (\ref{5.1}).
		\end{itemize}
	\end{alg}
	The conclusion in Step~3 is justified by Theorem \ref{thm 5.8}.
	The following is an exposition for the above algorithm.
	\begin{eg}\rm
		Consider the constrained optimization problem
		\begin{equation}\notag
			\left\{ \baray{cl}
				\min & x + 2x^6 - x^7 \\
				s.t. & x \in [-2, -1] \cup [1,2].
			\earay \right.
		\end{equation}
		The moment relaxation is
		\begin{equation}\notag
			\left\{ \baray{cl}
				\min & \langle f\; , \; y^{(1)}\rangle + \langle f\; , \; y^{(2)}\rangle \\
				s.t. & -M_{3}[y^{(1)}] \succeq N_{3}[y^{(1)}] \succeq -2M_{3}[y^{(1)}], \\
					  & 2M_{3}[y^{(2)}] \succeq N_{3}[y^{(2)}] \succeq M_{3}[y^{(2)}], \\
					  & y_0^{(1)} + y_0^{(2)} =1.
			\earay \right.
		\end{equation}
		The minimizer $y^* = (y^{(*,1)}, y^{(*,2)})$ of the above is obtained as
		\begin{align*}
			y^{(*,1)} =&~ 0.4191 \cdot (1, -1, 1, -1, 1, -1, 1, -1), \\
			y^{(*,2)} =&~ 0.5809 \cdot (1, 0, -2, -6, -14, -30, -62, -126)~+ \\
							&~ 0.6058 \cdot (0, 1, 3, 7, 15, 31, 63, 127).
		\end{align*}
		Applying Algorithm \ref{alg 5.4}, we get $g_0^{(1)} = -1, g_0^{(2)} = -2, g_1^{(2)} = 3$ and the polynomials
		\[
		g^{(1)}(x) = -1-x, \quad g^{(2)}(x) = -2 + 3x - x^2.
		\]
		Therefore, the minimizers are the distinct roots $-1, 1, 2$ and the global minimum value $f_{min} = 2$.
	\end{eg}

	\subsection{The case $d$ is even ($d=2d_0$)}
	When the degree $d=2d_0$ is even, by the representation \reff{sos even}, $f_{min}$ equals the maximum value of
	\begin{equation}\label{5.4}
		\left\{ \baray{cl}
			\max & \gamma \\
			s.t. & f-\gamma=[x]_{d_0}^{T}X_{l}[x]_{d_0} + (x-a_l)(b_l-x)([x]_{d_0-1}^{T}Y_{l}[x]_{d_0-1}), \\
			& X_{l} \in S_{+}^{d_0+1}, Y_{l} \in S_{+}^{d_0}, l = 1, \ldots, m.
		\earay \right.
	\end{equation}
	Its dual optimization is the moment relaxation
	\begin{equation}\label{5.5}
		\left\{ \baray{cl}
			\min & \langle f\; , \; y^{(1)}\rangle + \cdots + \langle f\; , \; y^{(m)}\rangle \\
			s.t. & (a_l+b_l)N_{d_0-1}[y^{(l)}] \succeq a_lb_lM_{d_0-1}[y^{(l)}]+G_{d_0}[y^{(l)}], \\
			& M_{d_0}[y^{(l)}] \succeq 0, \, y_0^{(1)} + \cdots + y_0^{(m)} = 1, \\
			& y^{(l)}=(y_{0}^{(l)}, y_{1}^{(l)}, \ldots, y_{2d_0}^{(l)}), \\
			& l = 1, \ldots, m.
		\earay \right.
	\end{equation}
	We still denote by $f_{sos}$ and $f_{mom}$ the optimal values of (\ref{5.4}) and (\ref{5.5}) respectively. The same conclusion in Theorem~\ref{thm 5.3} also holds here. The optimizers for \reff{5.1} can be obtained by the following algorithm.

	\begin{alg}\cite[Algorithm~3.3.6]{nie2023moment} \label{alg 5.6}\rm
		Assume $d=2d_0$ and $(y^{(1)}, \ldots, y^{(m)})$ is a minimizer for the moment relaxation (\ref{5.5}). For each $l$ with $y_0^{(l)} > 0$ and $r=\text{\normalfont rank}\,M_{d_0}[y^{(l)}]$, do the following:
		\begin{itemize}
			\item[Step~1] If $r\leq d_0$, solve the linear system
			$$\begin{bmatrix}
				y_0^{(l)} & y_1^{(l)} & \cdots & y_{r-1}^{(l)} \\
				y_1^{(l)} & y_2^{(l)} & \cdots & y_r^{(l)} \\
				\vdots & \vdots & \ddots & \vdots \\
				y_{2d_0-r}^{(l)} & y_{2d_0-r+1}^{(l)} & \cdots & y_{2d_0-1}^{(l)} \\
			\end{bmatrix}
			\begin{bmatrix}
				g_0^{(l)}  \\
				g_1^{(l)} \\
				\vdots \\
				g_{r-1}^{(l)} \\
			\end{bmatrix} =
			\begin{bmatrix}
				y_r^{(l)}  \\
				y_{r+1}^{(l)} \\
				\vdots \\
				y_{2d_0}^{(l)} \\
			\end{bmatrix}.$$
			\item[Step~2] If $r=d_0+1$, compute the smallest value of $y^{(l)}_{2d_0+1}$ satisfying
			\[
			b_lM_{d_0}[y^{(l)}] \succeq N_{d_0}[y^{(l)}] \succeq a_lM_{d_0}[y^{(l)}],
			\]
			then solve the linear system
			$$\begin{bmatrix}
				y_0^{(l)} & y_1^{(l)} & \cdots & y_{d_0}^{(l)} \\
				y_1^{(l)} & y_2^{(l)} & \cdots & y_{d_0+1}^{(l)} \\
				\vdots & \vdots & \ddots & \vdots \\
				y_{d_0}^{(l)} & y_{d_0+1}^{(l)} & \cdots & y_{2d_0}^{(l)} \\
			\end{bmatrix}
			\begin{bmatrix}
				g_0^{(l)}  \\
				g_1^{(l)} \\
				\vdots \\
				g_{d_0}^{(l)} \\
			\end{bmatrix} =
			\begin{bmatrix}
				y_{d_0+1}^{(l)}  \\
				y_{d_0+2}^{(l)} \\
				\vdots \\
				y_{2d_0+1}^{(l)} \\
			\end{bmatrix}.$$
			\item[Step~3] Compute $r$ distinct roots $t_1^{(l)}, \ldots, t_r^{(l)}$ of the polynomial
			\[
			g^{(l)}(x) \coloneqq g_0^{(l)}+g_1^{(l)}x+\cdots+g_{r-1}^{(l)}x^{r-1}-x^r.
			\]
			\item[Step~4] The roots $t_1^{(l)}, \ldots, t_r^{(l)}$ are minimizers of the optimization problem (\ref{5.1}).
		\end{itemize}
	\end{alg}
	The conclusion in Step~4 is justified by Theorem \ref{thm 5.8}.
	The following is an exposition for the above algorithm.
	\begin{eg}\rm
		Consider the constrained optimization problem
		\begin{equation}\notag
			\left\{ \baray{cl}
				\min & 4x^2 + 12x^3 + 13x^4 + 6x^5 + x^6\\
				s.t. & x \in [-4, -2] \cup [-1,2].
			\earay \right.
		\end{equation}
		The moment relaxation is
		\begin{equation}\notag
			\left\{ \baray{cl}
				\min & \langle f\; , \; y^{(1)}\rangle + \langle f\; , \; y^{(2)}\rangle \\
				s.t. & -6N_{2}[y^{(1)}] \succeq 8M_{2}[y^{(1)}]+G_{3}[y^{(1)}], \\
					  & N_{2}[y^{(2)}] \succeq -2M_{2}[y^{(2)}]+G_{3}[y^{(2)}], \\
					  & M_{3}[y^{(1)}] \succeq 0, M_{3}[y^{(2)}] \succeq 0, \\
					  & y_0^{(1)} + y_0^{(2)} =1.
			\earay \right.
		\end{equation}
		The minimizer $y^* = (y^{(*,1)}, y^{(*,2)})$ of the above is obtained as
		\begin{align*}
			y^{(*,1)} =&~0.0110 \cdot (1, -2, 4, -8, 16, -32, 64), \\
			y^{(*,2)} =&~0.9890 \cdot (1, 0, 0, 0, 0, 0, 0)~+ \\
			&~0.2190 \cdot (0, -1, 1, -1, 1, -1, 1).
		\end{align*}
		Applying Algorithm \ref{alg 5.6}, we get $g_0^{(1)} = -2, g_0^{(2)} = 0, g_1^{(2)} = -1$ and the polynomials
		\[
		g^{(1)}(x) = -2-x, \quad g^{(2)}(x) = -x - x^2.
		\]
		Therefore, the minimizers are the distinct roots $-2, -1, 0$ and the global minimum value $f_{min} = 0$.
	\end{eg}

	The performance of the moment relaxations \reff{5.3} and \reff{5.5} can be summarized as follows.
	
	\begin{theorem}\label{thm 5.8}
		Suppose $f$ is a univariate polynomial of degree $d=2d_0+1$ or $d=2d_0$.
		Then, all the optimal values $f_{min}$, $f_{sos}$, $f_{mom}$ are achieved for each corresponding optimization problem and they are all equal to each other.
		Suppose $y^* \coloneqq (y^{(*,1)}, \ldots, y^{(*,m)})$ is a minimizer of (\ref{5.3}) when $d=2d_0+1$ or of (\ref{5.5}) when $d=2d_0$.
		Then, the tms
		\[
		z^* \coloneqq  y^{(*,1)} + \cdots + y^{(*,m)}
		\]
		must admit a representing measure $\mu^*$  supported in $K$, and each point in the support of $\mu^*$ is a minimizer of (\ref{5.1}). If $f$ is not a constant polynomial, then $f$ has at most $2m +  \lceil (d-1)/2 \rceil $ minimizers and the representing measure $\mu^*$ for $z^*$ must be $r$-atomic with
        \[r \leq 2m + \lceil (d-1)/2 \rceil .\]
	\end{theorem}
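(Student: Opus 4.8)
The plan is to prove the three assertions in order: the equality and attainment of $f_{min}$, $f_{sos}$, $f_{mom}$; the existence of a representing measure for $z^*$ supported in $K$; and finally the localization of its support together with the atom count.

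For the first assertion, attainment of $f_{min}$ is immediate because $K$ is a finite union of compact intervals and $f$ is continuous. By the interval representations \reff{sos odd} and \reff{sos even}, the shifted polynomial $f-f_{min}$ lies in the feasible cone of \reff{5.2} (resp.\ \reff{5.4}) for every $l$, so $\gamma=f_{min}$ is feasible for the SOS program; combined with the weak-duality chain $f_{sos}\le f_{mom}\le f_{min}$ recorded before Theorem~\ref{thm 5.3}, this yields $f_{sos}=f_{mom}=f_{min}$ and shows the SOS value is attained. To see the moment value is attained, I would take a minimizer $x^*\in K_{l'}$ of $f$ on $K$ and set $y^{(l')}=[x^*]_d$ with the remaining $y^{(l)}=0$; this point is feasible for \reff{5.3} (resp.\ \reff{5.5}) and has objective value $f(x^*)=f_{min}=f_{mom}$, so the minimum of the moment relaxation is achieved.

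For the second assertion, the key input is the solution of the univariate truncated moment problem on an interval. For each $l$, the subvector $y^{(*,l)}$ satisfies exactly the positive-semidefinite localizing conditions appearing in \reff{5.3} (when $d=2d_0+1$) or \reff{5.5} (when $d=2d_0$); these conditions are \emph{necessary and sufficient} for $y^{(*,l)}$ to admit a representing measure $\mu^{(l)}$ supported on $[a_l,b_l]=K_l$ (see \cite[Chapter~3]{nie2023moment} and \cite{curto2005truncated}). Setting $\mu^*\coloneqq \mu^{(1)}+\cdots+\mu^{(m)}$ then produces a representing measure for $z^*=y^{(*,1)}+\cdots+y^{(*,m)}$ that is supported in $K$, with total mass $\mu^*(\mathbb{R})=\sum_l y_0^{(l)}=1$; thus $\mu^*$ is a probability measure on $K$.

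For the third assertion I would localize the support and count. Since $\int f\,d\mu^*=\langle f,z^*\rangle=f_{mom}=f_{min}$ while $f\ge f_{min}$ on $K\supseteq \supp{\mu^*}$ and $\mu^*$ is a probability measure, the identity $\int (f-f_{min})\,d\mu^*=0$ forces $f=f_{min}$ on $\supp{\mu^*}$, so every support point is a minimizer of \reff{5.1}. If $f$ is nonconstant, $f-f_{min}$ is a nonzero polynomial, hence the minimizer set is finite and $\mu^*$ is finitely atomic with atoms at minimizers. To bound their number I would split the minimizers into endpoint minimizers, of which there are at most $2m$, and interior minimizers: each interior minimizer lies in some open $(a_l,b_l)$ and is a local minimum of $f$, i.e.\ a point where $f'$ changes sign from negative to positive. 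Since $\deg f'=d-1$, such sign changes, which alternate between minima and maxima and are at most $d-1$ in total, give at most $\lceil (d-1)/2\rceil=\lfloor d/2\rfloor=d_0$ local minima. Therefore the number of distinct minimizers, and hence the number of atoms $r$ of $\mu^*$, is at most $2m+\lceil (d-1)/2\rceil$.

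The main obstacle is the second assertion: justifying that the displayed localizing inequalities are not merely necessary but \emph{sufficient} for representability on $[a_l,b_l]$. This is precisely the content of the one-dimensional truncated moment problem, and rather than reprove it I would invoke the interval representations \reff{sos odd}--\reff{sos even} together with the flat-extension and minimal-measure theory of \cite{curto2005truncated}, as organized in \cite[Chapter~3]{nie2023moment}. A secondary point requiring care is the interior-minimizer count, where one must argue cleanly that a degree-$d$ univariate polynomial has at most $\lfloor d/2\rfloor$ local minima by using that the local minima and maxima of $f'$ alternate and total at most $d-1$.
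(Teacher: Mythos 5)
Your proof is correct and, in its core steps, it is the same as the paper's: equality of the three optimal values via the interval representations \reff{sos odd}--\reff{sos even} (i.e., Theorem~\ref{thm 5.3}), existence of representing measures $\mu^{(l)}$ on $[a_l,b_l]$ by invoking the univariate truncated moment problem (your citation of \cite[Chapter~3]{nie2023moment} and \cite{curto2005truncated} is exactly the paper's appeal to Theorem~3.3.4 of \cite{nie2023moment}), the identity $\int (f-f_{min})\,d\mu^* = 0$ to force $supp(\mu^*)$ into the minimizer set, and the same count of at most $2m$ endpoint minimizers plus at most $\lceil (d-1)/2\rceil$ interior local minima from the alternation of critical points of a degree-$d$ polynomial. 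The one step where you genuinely diverge is attainment: the paper exhibits strictly feasible points for the moment relaxations, namely $\hat{y}^{(l)}=\int_{a_l}^{b_l}[x]_{2d_0+1}\,dx$ and $\tilde{y}^{(l)}=\int_{a_l}^{b_l}[x]_{2d_0}\,dx$, and invokes strong duality to conclude $f_{sos}=f_{mom}$ together with attainment of the SOS optima, whereas you avoid duality theory by producing explicit optimal feasible points: $\gamma=f_{min}$ for \reff{5.2}/\reff{5.4}, and the atomic tuple with $y^{(l')}=[x^*]_d$ at a minimizer $x^*\in K_{l'}$ and zero elsewhere for \reff{5.3}/\reff{5.5}. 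Your route is more elementary and has the small advantage of explicitly certifying that the moment relaxations themselves attain their minima (the standing hypothesis ``let $y^*$ be a minimizer of \reff{5.3} or \reff{5.5}''), which the paper's Slater-based argument establishes only for the SOS side; the paper's route, in exchange, records the strict feasibility of the moment programs, information that is useful in its own right (e.g., for well-posedness of the semidefinite programs).
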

	\begin{proof}
		Since each interval $[a_l,b_l]$ is compact, $K$ is also compact. So the minimum value $f_{min}$ is achievable, and it equals the largest $\gamma\in \mathbb{R}$ such that $f-\gamma$ is nonnegative on $[a_l,b_l]$ for every $l = 1, \ldots, m$, so $f_{min}=f_{sos}$ (see Theorem \ref{thm 5.3}). Each of the moment relaxations (\ref{5.3}) and (\ref{5.5}) has a strictly feasible point, e.g., the tms $\hat{y}^{(l)}=\int_{a_l}^{b_l} [x]_{2d_0+1} \,dx$
		is strictly feasible and
		\[
		b_lM_{d_0}[\hat{y}^{(l)}] \succ N_{d_0}[\hat{y}^{(l)}] \succ a_lM_{d_0}[\hat{y}^{(l)}].
		\]
		The tms $\Tilde{y}^{(l)}=\int_{a_l}^{b_l} [x]_{2d_0} \,dx$
		is strictly feasible and
		\[
		(a_l+b_l)N_{d_0-1}[\Tilde{y}^{(l)}] \succ a_lb_lM_{d_0-1}[\Tilde{y}^{(l)}]+G_{d_0}[\Tilde{y}^{(l)}], \quad M_{d_0}[\Tilde{y}^{(l)}] \succ 0.
		\]
		By the strong duality, $f_{sos}=f_{mom}$, and both (\ref{5.2}) and (\ref{5.4}) achieve their optimal values. By \cite[Theorem 3.3.4]{nie2023moment}, $y^{(*,l)}$ must admit a representing measure $\mu^{(l)}$ supported in $[a_l,b_l]$. Hence, $z^*$ must admit a representing measure $\mu^*=\mu^{(1)}+\cdots+\mu^{(m)}$ supported in $K$. The optimization problem (\ref{5.1}) is then equivalent to the linear convex conic optimization
		\begin{equation}\label{5.6}
			\left\{ \baray{cl}
				\min &  \int f\,d\mu  \\
				s.t. & \mu(K)=1, \mu \in \mathscr{B}(K),
			\earay \right.
		\end{equation}
		where $\mathscr{B}(K)$ denotes the convex cone of all Borel measures whose supports are contained in $K$. We claim that if a Borel measure $\mu^*$ is a minimizer of (\ref{5.6}), then each point in the support of $\mu^*$ is a minimizer of (\ref{5.1}). Suppose $E \subseteq K$ is the set of minimizers of (\ref{5.1}). For any $x^* \in E$, let $\dt_{x^*}$ denote the unit Dirac measure supported at $x^*$.
		Then, we have
		\[
		f_{min} = \int_{K} f_{min}\,d\mu^* \leq \int_{K} f(x)\,d\mu^* \leq \int_{K} f(x)\,d\dt_{x^*} = f(x^*) = f_{min}.
		\]
		Hence,
		\[
		0 = \int_{K} [f(x) - f_{min} ] \,d\mu^* = \int_{supp(\mu^*)} [f(x) - f_{min} ] \,d\mu^*.
		\]
		Thus, $f=f_{min}$ on $supp(\mu^*)$. This implies that $supp(\mu^*) \subseteq E$. So, every point in $supp(\mu^*)$ is a minimizer of \reff{5.1}.

		Note that $f$ has degree $d$. If $f$ is not a constant polynomial, it can have at most $d-1$ critical points. Moreover, the local maximizers and minimizers alternate. Thus, at most $\lceil (d-1)/2 \rceil$ of these critical points are local minimizers. On each interval $[a_l, b_l]$, two endpoints are possibly local minimizers. Since there are $m$ intervals in total, $f$ has at most $2m + \lceil (d-1)/2 \rceil$ local minimizers on $K$. In the above, we have proved that each point in $supp(\mu^*)$ is a minimizer of (\ref{5.1}). So the representing measure $\mu^*$ for $z^*$ must be $r$-atomic with $r \leq 2m + \lceil (d-1)/2 \rceil$.
	\end{proof}
	We refer to Algorithm \ref{alg 5.4} (when $d = 2d_0+1$) and Algorithm \ref{alg 5.6} (when $d=2d_0$) for how to determine the support of the representing measure $\mu^{(l)}$ for $y^{(*,l)}$. The points in the support are all minimizers of (\ref{5.1}). The upper bound for the number of minimizers is already sharp when $m=1$. For instance, consider the optimization
		\begin{equation}\notag
			\left\{ \baray{cl}
				\min & x(1-x)(x+1) \\
				s.t. & x \in [-1,1].
			\earay \right.
		\end{equation}
		There are $3$ global minimizers $-1, 0, 1$ and $ 2m + \lceil (d-1)/2 \rceil = 2+1 =3$.
		
We would like to remark that the representations for nonnegative univariate polynomials have broad applications.
In particular, it can be applied to the shape design of transfer functions for linear time invariant (LTI) single-input-single-output (SISO) systems \cite{nie2006shape}.
Since the transfer function is rational, the optimization problem can be formulated in terms of coefficients of polynomials.
We can then solve it by using representations of nonnegative univariate polynomials.
For instance, we look for a transfer function such that it is close to a piecewise constant shape.
That is, we want the transfer function to be close to given constant values $\xi_1, \ldots, \xi_m$ in a union of $m$ disjoint intervals $[a_l, b_l]$ with
\[
a_1 < b_1 < a_2 < b_2 < \cdots < a_m < b_m.
\]
As in \cite{nie2006shape}, the transfer function can be written as $p_1(x)/p_2(x)$. Specifically, we want to get $p_1, p_2$ such that
\[
\begin{gathered}
	p_1(x), p_2(x) \ge 0, \quad \forall x \ge 0, \\
	(1- \af)\xi_l \le \frac{p_1(x)}{p_2(x)} \le (1+\beta)\xi_l, \quad \forall x \in [a_l, b_l], \ l = 1, \ldots, m.
\end{gathered}
\]
The above is equivalent to the linear conic constraints
\begin{align*}
p_1(x), p_2(x) &\in \mathscr{P}_d([0, \infty)), \\
p_1 - (1-\af)\xi_lp_2 &\in \mathscr{P}_d([a_l, b_l]), \ l = 1, \ldots, m, \\
(1+\beta)\xi_lp_2 - p_1 &\in \mathscr{P}_d([a_l, b_l]), \ l = 1, \ldots, m.
\end{align*}
We refer to \cite{nie2006shape} for more details.

	\section{Numerical Experiments}\label{sc:eg}
	In this section, we present numerical experiments for how to solve polynomial optimization over the union of several basic closed semialgebraic sets. Algorithm~\ref{alg} is applied to solve it. All computations are implemented using MATLAB R2022a on a MacBook Pro equipped with Apple M1 Max processor and 16GB RAM. The unified moment relaxation (\ref{3.2}) is solved by the software \texttt{Gloptipoly} \cite{henrion2009gloptipoly}, which calls the SDP package \texttt{SeDuMi} \cite{sturm1999using}. For neatness, all computational results are displayed in four decimal digits.

	\begin{eg}\rm
		Consider the constrained optimization problem
		\begin{equation}\notag
		\left\{	\begin{array}{cl}
				\min\limits_{x \in \mathbb{R}^4} & (x_1^2 + x_2^2 + x_3^2 + x_4^2 + 1)^2 - 4(x_1^2x_2^2 + x_2^2x_3^2 + x_3^2x_4^2 + x_4^2 + x_1^2)\\
				s.t. & x \in K_1 \cup K_2 \cup K_3 \cup K_4,
			\end{array} \right.
		\end{equation}
		where
		\[K_1 = \{x \in \mathbb{R}^4 : x_1^2 + x_2^2 +x_3^2 \leq 0\},
		\]
		\[
		K_2 = \{x \in \mathbb{R}^4 : x_1^2 + x_2^2 +x_4^2 \leq 0\},
		\]
		\[
		K_3 = \{x \in \mathbb{R}^4 : x_1^2 + x_3^2 +x_4^2 \leq 0\},
		\]
		\[
		K_4 = \{x \in \mathbb{R}^4 : x_2^2 + x_3^2 +x_4^2 \leq 0\}.
		\]
		The objective function is a dehomogenization of the Horn's form \cite{Rez00}.
		For $k=2$, we get $f_{mom, 2} = 0$, and the flat truncation (\ref{3.3}) is met for all $l \in \mathcal{A}=\{1,4\}$. So,  $f_{mom,2}=f_{min}$.
		 The obtained four minimizers are
		\[
		(0, 0, 0, \pm1) \in K_1, \quad (\pm1, 0, 0, 0) \in K_4.
		\]
		For $k=2$, the unified moment relaxation \reff{3.2} took around $0.6$ second, while solving the individual moment relaxations \reff{1.3} for all $K_1, K_2, K_3, K_4$ took around $0.9$ second.
	\end{eg}

	\begin{eg}\rm
		Consider the constrained optimization problem
		\begin{equation}\notag
			\left\{ \baray{cl}
				\min\limits_{x \in \mathbb{R}^3} &  x_1^3 + x_2^3 + x_3^3 - x_1^2x_2 - x_1x_2^2 - x_1^2x_3 - x_1x_3^2  \\
				& \qquad \qquad \qquad \quad- x_2^2x_3 - x_2x_3^2 + 3x_1x_2x_3\\
				s.t. & x \in K_1 \cup K_2 \cup K_3,
			\earay \right.
		\end{equation}
		where
		\[
		K_1 = \{x \in \mathbb{R}^3 : x_1 \geq 0, x_1^2 + x_2^2 + x_3^2 =1\},
		\]
		\[
		K_2 = \{x \in \mathbb{R}^3 : x_2 \geq 0, x_1^2 + x_2^2 + x_3^2 =1\},
		\]
		\[
		K_3 = \{x \in \mathbb{R}^3 : x_3 \geq 0, x_1^2 + x_2^2 + x_3^2 =1\}.
		\]
		The objective function is obtained from Robinson's form \cite{Rez00} by changing $x_i^2$ to $x_i$ for each $i$.
		For $k=2$, we get $f_{mom, 2} = -1.3185$, and the flat truncation (\ref{3.3})
		is met for all $l \in \mathcal{A}=\{1,2,3\}$. So,  $f_{mom,2}=f_{min}$. The obtained three minimizers are
		\[
		(0.2783, 0.2783, -0.9193) \in K_1 \cap K_2,  \quad (0.2783, -0.9193, 0.2783) \in K_1 \cap K_3,
		\]
		\[
		(-0.9193, 0.2783, 0.2783) \in K_2 \cap K_3.
		\]
		For $k=2$, the unified moment relaxation \reff{3.2} took around $0.6$ second, while solving the individual moment relaxations \reff{1.3} for all $K_1, K_2, K_3$ took around $1.1$ seconds.
	\end{eg}

	\begin{eg}\rm
		Consider the constrained optimization problem
		\begin{equation}\notag
			\left\{ \baray{cl}
				\min\limits_{x \in \mathbb{R}^3} & x_1x_2x_3 + x_1^2x_2^2(x_1^2 + x_2^2) + x_3^6 - 3x_1^2x_2^2x_3^2\\
				s.t. & x \in K_1 \cup K_2 \cup K_3,
			\earay \right.
		\end{equation}
		where
		\[
		K_1 = \{x \in \mathbb{R}^3 : x_1^2+x_2^2-x_3^2 = 0, x_2x_3 \geq 0\},
		\]
		\[
		K_2 = \{x \in \mathbb{R}^3 : x_1^2+x_3^2-x_2^2 = 0, x_1x_3 \geq 0\},
		\]
		\[
		K_3 = \{x \in \mathbb{R}^3 : x_2^2+x_3^2-x_1^2 = 0, x_1x_2 \geq 0\}.
		\]
		The objective function is obtained from Motzkin's form \cite{Rez00} by adding the term $x_1x_2x_3$.
		For $k=3$, we get $f_{mom, 3} = -1.0757$, and the flat truncation (\ref{3.3})
		is met for all $l \in \mathcal{A} = \{2, 3\}$. So,  $f_{mom,3}=f_{min}$. The obtained four minimizers are
		\[
		(-1.0287, -1.6390, -1.2760) \in K_2, \quad (1.0287, -1.6390, 1.2760) \in K_2,
		\]
		\[
		(-1.6390, -1.0287, -1.2760) \in K_3, \quad (1.6390, 1.0287, -1.2760) \in K_3.
		\]
		For $k=3$, the unified moment relaxation \reff{3.2} took around $0.7$ second, while solving the individual moment relaxations \reff{1.3} for all $K_1, K_2, K_3$ took around $1.2$ seconds.
	\end{eg}

	\begin{eg}\rm
		Consider the constrained optimization problem
		\begin{equation}\notag
			\left\{ \baray{cl}
				\min\limits_{x \in \mathbb{R}^3} & x_1^2x_2^2 + x_1^2x_3^2 + x_2^2x_3^2 + 4x_1x_2x_3 \\
				s.t. & x \in K_1 \cup K_2 \cup K_3,
			\earay \right.
		\end{equation}
		where
		\[
		K_1 = \{x \in \mathbb{R}^3 : x_1 = x_2^2, x_3 = x_2^2 \},
		\]
		\[
		K_2 = \{x \in \mathbb{R}^3 : x_1^2+x_2^2+x_3^2 \leq 4, x_1x_2=-x_3, x_1x_3 \leq 0\},
		\]
		\[
		K_3 = \{x \in \mathbb{R}^3 : -1 \leq x_1 \leq 0, -1 \leq x_2 \leq 0, -1 \leq x_3 \leq 0 \}.
		\]
		The objective function is a dehomogenization of the Choi-Lam form \cite{Rez00}.
		For $k=2$, we get $f_{mom, 2} = -1$, and the flat truncation (\ref{3.3})
		is met for all $l \in \mathcal{A}=\{1,2,3\}$. So,  $f_{mom,2}=f_{min}$. The obtained four minimizers are
		\[
		(1, -1, 1) \in K_1, \quad (-1, 1, 1) \in K_2, \quad (1, 1, -1) \in K_2, \quad (-1, -1, -1) \in K_3.
		\]
		For $k=2$, the unified moment relaxation \reff{3.2} took around $0.6$ second, while solving the individual moment relaxations \reff{1.3} for all $K_1, K_2, K_3$ took around $1.1$ seconds.
	\end{eg}

	A class of problems like (\ref{1.1}) has absolute values in the constraints. For example, we consider that
	\[
	K = \Big\{ x : h(x) + \sum_{i=1}^{\ell} \lvert g_i(x) \rvert \geq 0 \Big\}.
	\]
	We can equivalently express $K$ as
	\be \label{K:abs}
	K = \bigcup_{s_1, \ldots, s_l = \pm 1}
	\Big\{ x : h(x) + \sum_{i=1}^{\ell}s_i \cdot g_i(x) \geq 0, s_i \cdot g_i(x) \geq 0 \Big\}.
	\ee

	\begin{eg}\label{eg abs val}\rm
		Consider the constrained optimization problem
		\begin{equation}\notag
			\left\{ \baray{cl}
				\min\limits_{x \in \mathbb{R}^2} & x_1^4 + x_2^4 - x_1^2x_2^2 - 2x_1^2 - 3x_2^2 \\
				s.t. & |x_1|^3 + |x_2|^3 \geq 4.
			\earay \right.
		\end{equation}
		The constraining set can be equivalently expressed as $\bigcup\limits_{l=1}^4 K_l$ with
		\[
		K_1 = \{x \in \mathbb{R}^2 : x_1 \geq 0, x_2 \geq 0, x_1^3 + x_2^3 \geq 4 \},
		\]
		\[
		K_2 = \{x \in \mathbb{R}^2 : x_1 \geq 0, -x_2 \geq 0, x_1^3 - x_2^3 \geq 4  \},
		\]
		\[
		K_3 = \{x \in \mathbb{R}^2 : -x_1 \geq 0, x_2 \geq 0, -x_1^3 + x_2^3 \geq 4  \},
		\]
		\[
		K_4 = \{x \in \mathbb{R}^2 : -x_1 \geq 0, -x_2 \geq 0, -x_1^3 - x_2^3 \geq 4  \}.
		\]
		A contour of the objective over the feasible set is in Figure~\ref{fig}.
		For $k=2$, we get $f_{mom, 2} = -6.3333$, and the flat truncation (\ref{3.3})
		is met for all $l \in \mathcal{A}=\{1,2,3,4\}$. So,  $f_{mom,2}=f_{min}$. The obtained four minimizers are
		\[
		(1.5275, 1.6330) \in K_1, \quad (1.5275, -1.6330) \in K_2,
		\]
		\[
		(-1.5275, 1.6330) \in K_3, \quad (-1.5275, -1.6330) \in K_4.
		\]
		For $k=2$, the unified moment relaxation \reff{3.2} took around $0.6$ second, while solving the individual moment relaxations \reff{1.3} for all $K_1, K_2, K_3, K_4$ took around $0.8$ second.
		
		\begin{figure}[htb]
			\centering
			\begin{tabular}{c}
				\includegraphics[width=0.5\textwidth]{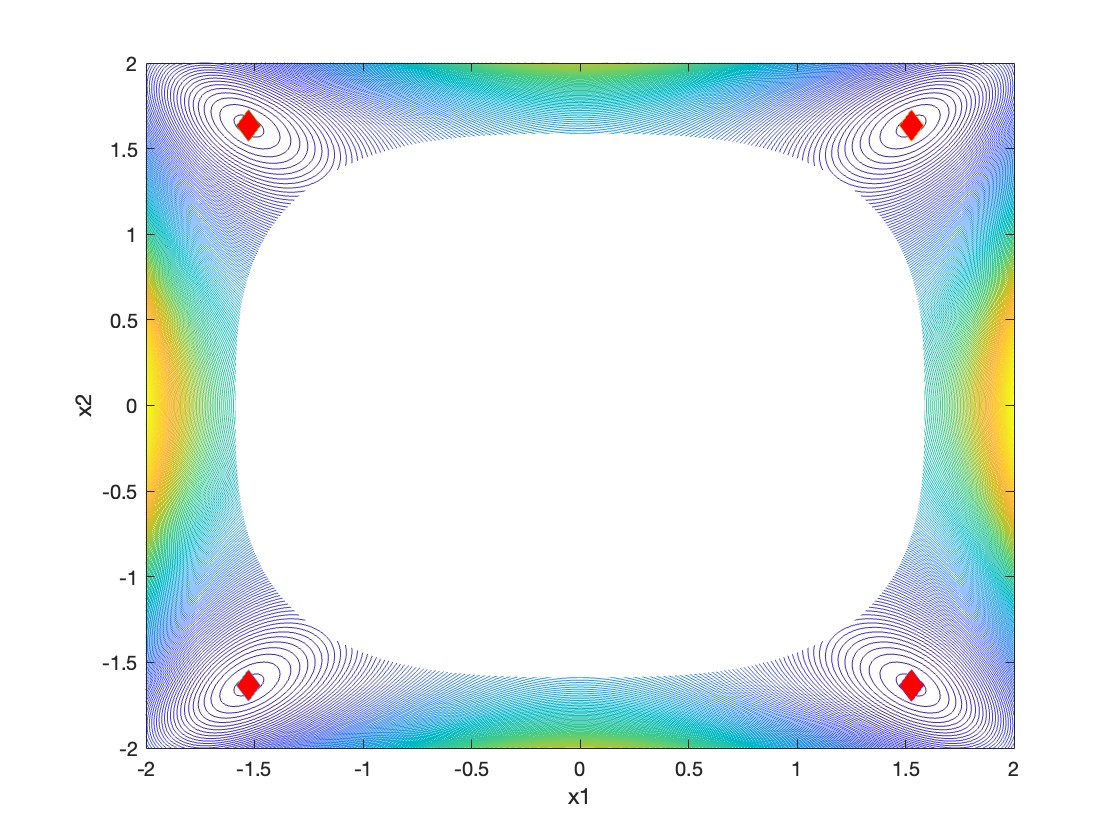}
			\end{tabular}
			\caption{The contour is for the objective function in Example~\ref{eg abs val}. The region outside the oval is the feasible set. The four diamonds are the minimizers.}\label{fig}
		\end{figure}
		
	\end{eg}

	Now we show how to compute the $(p,q)$-norm of a matrix $A \in \re^{m\times n}$ for positive integers $p,q$. Recall that
	\[
	\| A \|_{p,q} \coloneqq \max_{x \neq 0} \frac{\| Ax \|_{p}}{\|x\|_{q}} \ = \max_{\|x\|_q=1}\| Ax \|_{p}.
	\]
	When $p$ and $q$ are both even integers, this is a standard polynomial optimization problem. If one of them is odd, then we need to get rid of the absolute value constraints. When $p$ is even and $q$ is odd, we can equivalently express that
	\begin{equation}\label{norm_EO}
	\left\{	
		\begin{array}{rcl}
			(\| A \|_{p,q})^p = &  \max & (a_1^Tx)^p + \cdots + (a_m^Tx)^p \\
		 &	s.t. & |x_1|^q + \cdots + |x_n|^q =1.
		\end{array}
	\right.
	\end{equation}
	Here, the $a_i^T$ is the $i$th row of $A$. When $p$ is odd and $q$ is even, we have
	\begin{equation}\label{norm_OE}
		\left\{	
		\begin{array}{rcl}
			\| A \|_{p,q} = &\max & x_{n+1} \\
			&s.t. & (x_1)^q + \cdots + (x_n)^q =1, \\
			& &|a_1^Tx|^p + \cdots + |a_m^Tx|^p =  (x_{n+1})^p.
		\end{array}
		\right.
	\end{equation}
	Similarly, when $p$ and $q$ are both odd, we have
	\begin{equation}\label{norm_OO}
		\left\{	
		\begin{array}{rcl}
			\| A \|_{p,q} = &\max &  x_{n+1} \\
			&s.t. & |x_1|^q + \cdots + |x_n|^q =1, \\
			&& |a_1^Tx|^p + \cdots + |a_m^Tx|^p = (x_{n+1})^p.
		\end{array}
		\right.
	\end{equation}
	The constraining sets in the above optimization problems can be decomposed in the same way as in \reff{K:abs}.
	
	\begin{eg}\label{egnorm}\rm
		Consider the following matrix
		\[
		A = \begin{bmatrix*}[r]
		-8 &  -8 & -3 & 1 \\
		4  &  -7  &   7  &   6 \\
		6  &  -7  &  -7  &  -4 \\
		8   &  0  &  -9  &  -6
	    \end{bmatrix*}.
		\]
		Some typical values of the norm $\| A \|_{p,q}$ and the vector $x^*$
		for achieving it are listed in Table~\ref{tab:norm}.
			\begin{table}[htb]
			\begin{center}
			\caption{The $(p,q)$-norms for the matrix $A$ in Example~\ref{egnorm}.}
			\label{tab:norm}
			\begin{tabular}{||c| c | c ||}
				\hline
				$(p,q)$ &  $\|A\|_{p,q}$   & $x^*$ for $\|A\|_{p,q} = \|Ax^*\|_p, \|x^*\|_q=1$  \\ [0.5ex]
				\hline
				$(2,3)$ &  $21.6132$   &  $(0.6568,   -0.3937,   -0.7542,   -0.6097)$  \\
				\hline
				$(3,2)$ &  $15.5469$   &  $(0.5606, -0.2097, -0.6742, -0.4327)$  \\
				\hline
				$(3,3)$ &  $19.0928$   &  $(0.6782, -0.4598, -0.7329, -0.5820)$  \\
				\hline
				$(3,4)$ &  $21.2617$   &  $(0.7446, -0.5841, -0.7824, -0.6700)$  \\
				\hline
				$(4,3)$ &  $18.0128$   &  $(0.6825, -0.4605, -0.7305, -0.5794)$  \\
				\hline
				$(4,4)$ &  $ 20.0605$   &  $(0.7465, -0.5863, -0.7809, -0.6682)$  \\
				\hline
				$(4,5)$ &  $21.4196$   &  $(0.7895, -0.6633, -0.8166, -0.7261)$  \\
				\hline
				$(5,4)$ &  $19.3770$   &  $(0.7471, -0.5848, -0.7810, -0.6683)$  \\
				\hline
				$(5,5)$ &  $20.6894$   &  $(0.7896, -0.6635, -0.8165, -0.7260)$  \\
				\hline
			\end{tabular}
		\end{center}
		\end{table}
		The norms $\|A\|_{p,q}$ are all computed successfully by the unified moment relaxation \reff{3.2} for the relaxation order $k=2$ or $3$.

	\end{eg}

\section{Conclusions and Future Work}\label{sc:con}
This paper proposes a unified Moment-SOS hierarchy for solving the polynomial optimization problem \reff{1.1} whose feasible set $K$ is a union of several basic closed semialgebraic sets $K_l$.
Instead of minimizing the objective $f$ separately over each individual set $K_l$, we give a unified hierarchy of Moment-SOS relaxations to solve \reff{1.1}. This hierarchy produces a sequence of lower bounds for the optimal value $f_{min}$ of \reff{1.1}.
When the archimedeanness is met for each constraining subset $K_l$, we show the asymptotic convergence of this unified hierarchy.
Furthermore,  if the linear independence constraint qualification, the strict complementarity and the second order sufficient conditions hold at every global minimizer for each $K_l$, we prove the finite convergence of the hierarchy.
For the univariate case, special properties of the corresponding Moment-SOS relaxation are discussed.
To the best of the authors' knowledge, this is the first unified hierarchy of Moment-SOS relaxations for solving polynomial optimization over unions of sets.
Moreover, numerical experiments are provided to demonstrate the efficiency of this method.
In particular, as applications, we show how to compute the $(p,q)$-norm of a matrix for positive integers $p,q$.

There exists relevant work on approximation and optimization about measures with unions of several individual sets.
For instance, Korda et al. \cite{KLMS23} considers the generalized moment problem (GMP) that exploits the ideal sparsity,
where the feasible set is a basic closed semialgebraic set containing conditions like $x_ix_j = 0$.
Because of this, the moment relaxation for solving the GMP involves several measures, each supported in an individual set.
Lasserre et al. \cite{JBYE} proposes the multi-measure approach to approximate the moments of Lebesgue measures supported in unions of basic semialgebraic sets.
Magron et al. \cite{MFHsemid} discusses the union problem in the context of piecewise polynomial systems.
We would also like to compare the sizes of relaxations \reff{1.3} and \reff{1.5}.
To apply the individual relaxation \reff{1.3}, we need to solve it for $m$ times. For the unified relaxation \reff{1.5}, we only need to solve it for one time.
For a fixed relaxation order $k$ in \reff{1.3}, the length of the vector $y^{(l)}$ is {$n+2k \choose 2k$}.
For the same $k$ in \reff{1.5}, there are $m$ vectors of $y^{(l)}$, and each of them has length {$n+2k \choose 2k$}.
The comparison of the numbers of constraints is similar.
Observe that \reff{1.3} has $|\mc{E}^{(l)}|$ equality constraints, $|\mc{I}^{(l)}|+1$ linear matrix inequality constraints, and one scalar equality constraint.
Similarly, \reff{1.5} has $|\mc{E}^{(1)}|+\cdots+|\mc{E}^{(m)}|$ equality constraints,
$ |\mc{I}^{(1)}|+\cdots+|\mc{I}^{(m)}|+m$ linear matrix inequality constraints,
and one scalar equality constraint.
It is not clear which approach is more computationally efficient.
However, in our numerical examples, solving \reff{1.5} is relatively faster.

There is much interesting future work to do.
For instance, when the number of individual sets is large, the unified Moment-SOS relaxations have a large number of variables. How to solve the moment relaxation \reff{3.2} efficiently is important in applications.
For large scale problems, some sparsity patterns can be exploited. We refer to \cite{klep2022sparse, mai2023, wang2021ts, wang2022cs} for related work. It is interesting future work to explore the sparsity for unified Moment-SOS relaxations.
Moreover, how to solve polynomial optimization over a union of infinitely many sets is another interesting future work.

\subsection*{Acknowledgements}
Jiawang Nie is partially supported by the NSF grant DMS-2110780.
Linghao Zhang is partially supported by the UCSD School of Physical Sciences Undergraduate Summer Research Award.

\end{document}